\definecolor{royalblue}{RGB}{65, 105, 225}
\definecolor{seagreen}{RGB}{46, 139, 87}
\definecolor{firebrick}{RGB}{178,34,34}
\definecolor{darkviolet}{RGB}{138, 43, 226}
\definecolor{carrotorange}{RGB}{237, 145, 33}
\pgfplotsset{every tick label/.append style={font=\small}}
\title{\LARGE \bf On Arbitrary Compression for Decentralized Consensus and Stochastic Optimization over Directed Networks}
\author{Mohammad Taha Toghani and C\'{e}sar A. Uribe
	\thanks{The authors are with the Department of Electrical and Computer Engineering, Rice University, 6100 Main St, Houston, TX 77005, USA, \{\href{mailto:mttoghani@rice.edu}{mttoghani}, \href{mailto:cauribe@rice.edu}{cauribe}\}@rice.edu. This work was partially funded by \mbox{ARPA-H} Strategic Initiative Seed Fund \#916012.}
}
\begin{document}
	\maketitle
	
	\begin{abstract}
		We study the decentralized consensus and stochastic optimization problems with compressed communications over static directed graphs. We propose an iterative gradient-based algorithm that compresses messages according to a desired compression ratio. The proposed method provably reduces the communication overhead on the network at every communication round. Contrary to existing literature, we allow for arbitrary compression ratios in the communicated messages. We show a linear convergence rate for the proposed method on the consensus problem. Moreover, we provide explicit convergence rates for decentralized stochastic optimization problems on smooth functions that are either (i) strongly convex, (ii) convex, or (iii) non-convex. Finally, we provide numerical experiments to illustrate convergence under arbitrary compression ratios and the communication efficiency of our algorithm.
	\end{abstract}


	\section{Introduction}
	\label{sec:introduction}
	In this work, we consider to solve the following stochastic optimization problem over a directed network of $n$ nodes:
	\begin{align}\label{eq:opt}
		f^\star &\coloneqq \min_{\vx{\in}\bbR^d} \left[ f(\vx) \coloneqq \frac{1}{n} \sum_{i=1}^n f_i(\vx) \right],\\
		f_i(\vx) &\coloneqq \bbE_{\xi_i\sim\mcD_i} \tilde{f}_i(\vx,\xi_i), \quad \text{for all } i{\in}[n],\nonumber
	\end{align}
	where $\tilde{f}_i:\bbR^d{\times} \mcS_i \rightarrow \bbR$ is agent $i$'s loss function with local data distribution $\mcD_i$ and $\mcS_i$ the output space of the random variable $\xi_i$. Problem~\eqref{eq:opt} can be reformulated as a constrained problem with the following format:
	\begin{align}\label{eq:opt-dec}
		\min_{(\vx_1,\dots,\vx_n){\in}\mcX} &\left[\frac{1}{n} \sum_{i=1}^n f_i(\vx_i) \right],\\
		\mcX = \{(\vx_1,\vx_2,\dots,\vx_n)&{\in}(\bbR^d)^n\,\, \text{s.t.}\,\,\vx_1 =\vx_2 = \dots =\vx_n\},\nonumber
	\end{align}
	which demands to jointly address consensus and optimization simultaneously. In the decentralized optimization setup, agents are only allowed to exchange information via a communication graph. Given the specific function choice
	\begin{align}\label{eq:consensus}
		f_i(\vx) = \lVert \vx-\vx_i(0) \rVert^2,
	\end{align}
	Problem~\eqref{eq:opt} turns into the average consensus problem~\cite{xiao2004fast} with initial vectors $\vx_i(0)$.
	
	Problem~\eqref{eq:opt} has been thoroughly studied under distributed~\cite{konevcny2015federated,spiridonoff2021communication,reddi2020adaptive} and decentralized~\cite{nedic2009distributed,nedic2017achieving} communication setups. The distributed setup provides benefits such as data privacy, on-device training, and parallel computation~\cite{kairouz2019advances}. This however imposes challenges like communication bottlenecks~\cite{konevcny2015federated}, scalability~\cite{Olshevsky2017LinearTA}, and vulnerability to adversarial attacks~\cite{su2016fault}.
	
	The decentralized optimization problem over undirected networks has been studied in~\cite{Nedic2009DistributedSM,ram2010distributed,shi2015extra,uribe2021dual}. The core of these algorithms relies on balanced communications, which usually translates into a doubly stochastic mixing matrix associated with the graph. Kempe et al.~\cite{kempe2003gossip} suggested the push-sum technique for consensus over (strongly connected) directed networks. Moreover, studies in~\cite{tsianos2012push,nedic2016stochastic,nedic2015nonasymptotic,assran2019stochastic} consider the inference, convex, and non-convex optimization over directed networks using the push-sum idea.
	
	Decentralized algorithms classically require the agents to share all their parameters with their local neighbors at each round. Hence, communication bottlenecks might appear in the networks. Works in~\cite{rabbat2005quantized,duchi2011dual,reisizadeh2019exact,xian2021communication} are proposed to address large communication requirements. More recently, using error-feedback techniques, several efforts have been made to mitigate the communication burden for consensus~\cite{zhang2021innovation,koloskova2019decentralized,toghani2021scalable}, inference~\cite{toghani2021communication,Toghani2021CommunicationEfficientAF}, and optimization problems~\cite{koloskova2019decentralized,koloskova2019decentralized2,kovalev2021linearly} over undirected graphs.
	
	Taheri et al.~\cite{taheri2020quantized} extended the results in~\cite{koloskova2019decentralized,koloskova2019decentralized2} to directed communications. However, the minimum compression ratio allowed in their proposed algorithm is bounded from below. For example, topologies such as Ring with spectral gap $\mcO(n^{-2})$ require a compression ratio of order $\Omega(n^4/(n^4+1))$, which is close to~$1$ (no compression) for large $n$. \textit{The main objective of this work is to allow arbitrary compression for decentralized consensus and stochastic optimization over directed networks.}

	In this work, inspired by \cite{zhang2021innovation,koloskova2019decentralized,koloskova2019decentralized2}, we consider a \textit{consensus stepsize} $\gamma$ to enable arbitrary compression ratios. In a nutshell, we consider solving Problem~\eqref{eq:opt-dec} over a static directed network with arbitrary compression. We summarize our contributions as follows:
	\begin{itemize}[leftmargin=1em]
		\item We propose an algorithm for decentralized consensus and stochastic optimization over directed communication graphs with compressed communications.
		\item Under an arbitrary compression ratio $\omega\in(0,1]$, we show explicit convergence rates of our algorithm for smooth functions which are either (i) strongly convex, (ii) convex, or (iii) non-convex. We also provide a comprehensive comparison of our results with their counterparts for directed and undirected graphs.
		\item We present empirical results to highlight the convergence under arbitrary compression and the communication efficiency of our method.
	\end{itemize}

	The rest of the paper is organized as follows. In Section~\ref{sec:setup}, describing the problem setup, we present our algorithm and highlight the results. Section~\ref{sec:analysis} presents our theoretical guarantees, and Section~\ref{sec:experiments} contains numerical experiments that corroborate the communication efficiency and arbitrary compression of the proposed method. We conclude with the remarks and future works in Section~\ref{sec:conclusion}.

	\noindent\textbf{Notation:} We show vectors and matrices with boldface lower-case and upper-case letters respectively. We refer to the entry in the $i$-th row and $j$-th column of matrix $\mX$ with $\mX_{ij}$. We use $[\mX]_i$ ($[\vx]_i$) to indicate the $i$-th row (element) of matrix $\mX$ (vector $\vx$). We refer to the vector of all one with size $n$ and identity matrix, respectively with $\vect{1}$ and $\mI$. For a parameter $\vx$, we write \mbox{$\vx(t)$} in reference to its value at time $t$. We refer to agents by subscripts. We also denote \mbox{$\lVert\vx\rVert$} and \mbox{$\lVert \mX \rVert_F$} respectively as $2$-norm of vector $\vx$ and Frobenius norm of matrix $\mX$.  We use \mbox{$\lVert \mW \rVert$} to indicate the matrix norm of a square matrix $\mW$.


	\section{Problem Setup, Algorithm, \& Results}\label{sec:setup}
	This section introduces the problem setup and presents the studied algorithm with supporting discussions on the convergence rate for convex and non-convex problems.
	
	\noindent\textbf{$\diamond$ Communication Network:}
	Consider a fixed, directed, and strongly connected network
	\mbox{$\mcG = \{[n],\mcE\}$}, \mbox{$\mcE\subseteq [n]{\times}[n]$}, where $(i,j)\in\mcE$ if there is an edge from node $i$ to $j$. For each agent \mbox{$i\in[n]$}, we define \mbox{$\mcN_i^{-}=\{j \text{ s.t. } (j,i)\in\mcE\}\cup\{i\}$} and \mbox{$\mcN_i^{+}=\{j \text{ s.t. } (i,j)\in\mcE\}\cup\{i\}$} as the sets of in-neighbors and out-neighbors. We consider a column stochastic \textit{mixing matrix} \mbox{$\mW{\in} [0,1]^{n{\times} n}$} (\mbox{$\vect{1}^\top\mW{=}\vect{1}^\top$}) consistent with network \mbox{$\mcG$}, where \mbox{$\mW_{ij} {=} 0$} if $(j,i)\notin \mcE$. For instance, mixing matrix \mbox{$\mW_{ij}=1/|\mcN_{j}^{+}|$}, for all $(j,i)\in\mcE$, is column stochastic. We denote $\beta=\lVert \mW{-}\mI \rVert$. Following~\cite{nedic2016stochastic}, there exists a set of positive constants \mbox{$\delta\in(0,1]$} (spectral gap), \mbox{$C,\kappa>0$}, and a stochastic vector \mbox{$\vphi\in\bbR^{n}$} (\mbox{$\vect{1}^\top \vphi = 1$}) such that the following properties hold for matrix $\mW$: for all \mbox{$t\geq 0$}, $i\in[n]$,
	\begin{align}\label{eq:mixing-matrix}
		\mW\vphi=\vphi,\,\,\, [\mW^t \vect{1}]_i \geq \kappa, \,\,\, \big\lVert\mW^t{-}\vphi\vect{1}^\top\big\rVert\leq C(1{-}\delta)^t.
	\end{align}

	\noindent\textbf{$\diamond$ Compression Operator:} We consider the class of compression operators \mbox{$Q:\bbR^d {\times} \mcZ {\times} (0,1] \rightarrow \bbR^d$} that satisfy
	\begin{align}\label{eq:q-comp}
		\bbE_{\vzeta}\big\lVert Q(\vx,\vzeta,\omega) - \vx \big\rVert^2 \leq (1{-}\omega) \big\lVert \vx \big\rVert^2, \qquad \forall \vx\in\bbR^d,
	\end{align}
	where \mbox{$\omega\in(0,1]$} is the \textit{compression ratio}, and random variable \mbox{$\vzeta$} with output space $\mcZ$ specifies the randomization of the operator. Every time that an agent uses operator $Q$, an independent realization of $\mcZ$ is obtained. We drop the dependencies on \mbox{$\vzeta$} and {$\omega$} from $Q$ and $\bbE$ for simplicity of notation. In the definition above, \mbox{$\omega{=}1$} implies no compression. The property in~\eqref{eq:q-comp}, includes a number of sparsification and quantization operators. For example, operator \mbox{$\mathrm{rand}_{\alpha\%}$} (or \mbox{$\mathrm{top}_{\alpha\%}$}) that selects random (or top) $\alpha$ percent elements out of $d$ entries. Here $\vzeta$ represents the randomness when selecting the entries. Another example is the operator \mbox{$\mathrm{qsgd}_{k}$} that rounds each entry to one of the $2^{k-1}{+}1$ quantized levels. Check~\cite[Table~1]{toghani2021communication} for the number of bits required for each operator. Details of other operators can be found in~\cite{stich2018sparsified,beznosikov2020biased}.

	\noindent\textbf{$\diamond$ Algorithm:} Algorithm~\ref{alg:comp-push-sum} shows the pseudo code of the proposed method. Consider each agent \mbox{$i\in[n]$} initially maintains the set of parameters \mbox{$\vx_i(t)$} and \mbox{$y_i(t)$}, and \mbox{$\hat\vx_j(t)$} for all \mbox{$j\in\mcN_i^{-}$} as the approximation of agent $i$'s in-neighbors parameters. Agent $i$ can only send (receive) messages to (from) $\mcN_i^{+}$ ($\mcN_i^{-}$).
	
	Let us first consider the update for parameters $\vx_i(t)$. At each round $t$, each agent $i$ computes a compressed version of the difference between vector $\vx_i(t)$ and its approximation $\hat\vx_i(t)$ using the compression operator $Q$, and transmits the compressed vector $\vq_i(t)$ to its out neighbors. Then, using the received compressed messages, agent $i$ updates its approximation of the in-neighbors' parameters, $\hat\vx_j(t)$ (Line~\ref{ln:update-x-hat}), and combines the approximated vectors using Line~\ref{ln:update-u}. As a consequence of~\eqref{eq:mixing-matrix}, the mixing matrix corresponding to the directed graph $\mcG$ is column stochastic which implies each agent converges to a weighted ($\phi_i$) version of the consensus vector. Inspired by~\cite{kempe2003gossip,tsianos2012push}, each agent $i\in[n]$ considers a slack scalar $y_i(t)$ initialized to $1$. Since $y_i(t)$ is an scalar variable, we assume there is no need for its compression. Along with compressed vectors $\vq_i(t)$, the agents transmit their $y_i(t)$ and apply the consensus step in Line~\ref{ln:update-y} to update the slack variables. This way, $\vz_i(t)$ indicates the normalized ratio between variables $\vx_i(t)$ and $y_i(t)$ (Line~\ref{ln:update-z}), which is common among all agents. In line~\ref{ln:update-x} of Algorithm~\ref{alg:comp-push-sum}, we show our method for the ``average consensus'', and ``stochastic optimization" problems, where in the case of stochastic optimization, the next iterate $\vx_i(t{+}1)$ will move in the direction of the local stochastic gradient.
	
	\begin{algorithm}[!t]
		\caption{\small Decentralized Consensus / Stochastic Optimization with Arbitrary Compressed Push-Sum over Directed Networks}\label{alg:comp-push-sum}
		{\textbf{input:} initial parameters \mbox{$\vx_i(0) \in \bbR^d$}, \mbox{for all $i{\in}[n]$}, column stochastic mixing matrix $\mW$ consistent with graph $\mcG$, consensus stepsize \mbox{$\gamma \in (0,1]$}, compression operator $Q$ with \mbox{$\omega \in (0,1]$}, optimization stepsize $\eta>0$.}\\
		
		\vspace{-1.2em}
		\begin{algorithmic}[1]
			\STATE{$\hat{\vx}_i(0) \coloneqq \vect{0}$, $y_i(0)\coloneqq 1$, for all  $i{\in}[n]$}
			\FOR{$t$ \textbf{in} $0,\dots,T{-}1$, in parallel for all $i{\in}[n]$}
			\STATE{$\vq_i(t)\coloneqq Q(\vx_i(t) - \hat{\vx}_i(t))$}\label{ln:diff-comp}
			\STATE{send \mbox{$\left(\vq_i(t),y_i(t)\right)$} to $\mcN_i^{+}$}\label{ln:send}
			\STATE{receive \mbox{$\left(\vq_j(t),y_j(t)\right)$} from all $j\in\mcN_i^{-}$}\label{ln:receive}
			\STATE{$\hat{\vx}_j(t{+}1)\coloneqq\hat{\vx}_j(t)+\vq_j(t)$, for all $j\in\mcN_i^{-}$}\label{ln:update-x-hat}
			\STATE{$y_i(t{+}1) \coloneqq \sum_{j{\in}\mcN_i^{-}}  \mW_{ij}\,y_j(t)$}\label{ln:update-y}
			\STATE{$\vu_i(t{+}1) \coloneqq \vx_i(t) + \gamma\hspace{-0.3em}\sum\limits_{\,\,\,\,j{\in}\mcN_i^{-}} \hspace{-0.3em} \mW_{ij}\left(\hat{\vx}_j(t{+}1){-}\hat{\vx}_i(t{+}1)\right)$}\label{ln:update-u}
			\STATE{\vspace{-0.8em}$\vz_i(t{+}1) \coloneqq \frac{\vu_i(t{+}1)}{y_i(t{+}1)} $}\label{ln:update-z}\vspace{0.2em}
			\STATE{$\diamond$ \textcolor{blue}{Option I:} (Average Consensus)\\
				\qquad $\vx_i(t{+}1)\coloneqq\vu_i(t{+}1)$\\
				$\diamond$ \textcolor{blue}{Option II:} (Stochastic Optimization)\\
				\qquad $\vx_i(t{+}1)\coloneqq\vu_i(t{+}1) - \eta \nabla \tilde{f}_i(\vz_i(t{+}1),\xi_{i,t{+}1})$}\label{ln:update-x}
			\ENDFOR
		\end{algorithmic}
	\end{algorithm}
	
	Algorithm~\ref{alg:comp-push-sum} resembles~\cite[Algorithms 1 and 2]{taheri2020quantized}, but the main difference is the existence of a consensus stepsize \mbox{$\gamma\in(0,1]$}. The proper choice of this stepsize enables our method to converge under any arbitrary compression ratio \mbox{$\omega\in(0,1]$}. Table~\ref{tab:cons-comparison} provides a full comparison of the convergence rates and compression ratios for various consensus methods. Figure~\ref{fig:cons-omega} in Section~\ref{sec:experiments} highlights the significance of the consensus stepsize on the convergence under arbitrary compressed communications.

	We now present Algorithm~\ref{alg:comp-push-sum} in matrix notation. Let \mbox{$\mX(t)=\big[\vx_1(t),\dots,\vx_n(t)\big]^\top$}, \mbox{$\overline{\mX}(t)=\big[\overline{\vx}(t),\dots,\overline{\vx}(t)\big]^\top$},\\
	\mbox{$Q(\mX)=\big[Q(\vx_1),\dots,Q(\vx_n)\big]^\top$}, where \mbox{$\overline{\vx}(t) {=} \big(\mX(t)^\top\vect{1}\big)/n$}.\\ We also define term \mbox{$\partial F(\mX{(t)}) = \bbE[\partial\tilde{F}(\mX{(t)},\vxi_{t})]$}, where \mbox{$\partial\tilde{F}(\mX{(t)},\vxi_{t}) = [\nabla\tilde{f}_1(\vx_1(t),\xi_{1,t}),\dots,\nabla\tilde{f}_n(\vx_n(t),\xi_{n,t})]^\top$}.\\ Similarly, one can define \mbox{$n{\times} d$} matrices $\mU(t)$, $\mZ(t)$, as well as $\vy(t)$ as the vector of $y_i(t)$. Therefore, Algorithm~\ref{alg:comp-push-sum} with Option  \textcolor{blue}{I} (average consensus) may be written as follows:
	\begingroup
	\allowdisplaybreaks
	\begin{subequations}\label{eq:update-cons}
		\begin{align}
			\hat{\mX}{(t{+}1)} &\coloneqq \hat{\mX}{(t)} + Q(\mX{(t)}-\hat{\mX}{(t)}),\label{eq:update-cons-1}\\
			\mX{(t{+}1)} &\coloneqq \mX{(t)} + \gamma\left(\mW{-}\mI\right)\hat{\mX}{(t{+}1)},\label{eq:update-lazy-cons}\\
			\vy(t{+}1)&\coloneqq \mW \vy(t),\\
			\vz_i(t{+}1)&\coloneqq {\vx_i(t{+}1)}/{y_i(t{+}1)},
		\end{align}
	\end{subequations}
	\endgroup
	where, due to the properties mentioned in~\eqref{eq:mixing-matrix}, it holds that $\overline{\mX}(t) = \overline{\mX}(0)$, for all {$t\geq 0$}. Furthermore, Algorithm~\ref{alg:comp-push-sum} with Option \textcolor{blue}{II} can be written in matrix notation as
	\begingroup
	\allowdisplaybreaks
	\begin{subequations}\label{eq:update-opt}
		\begin{align}
			\hat{\mX}{(t{+}1)} &\coloneqq \hat{\mX}{(t)} + Q(\mX{(t)}-\hat{\mX}{(t)}),\\
			\mU{(t{+}1)} &\coloneqq \mX{(t)} + \gamma\left(\mW{-}\mI\right)\hat{\mX}{(t{+}1)},\label{eq:update-lazy-opt}\\
			\vy(t{+}1)&\coloneqq \mW \vy(t),\\
			\vz_i(t{+}1)&\coloneqq {\vu_i(t{+}1)}/{y_i(t{+}1)},\\
			\mX{(t{+}1)} &\coloneqq \mU{(t{+}1)} - \eta\, \partial\tilde{F}(\mZ{(t{+}1)},\vxi_{t{+}1}),
		\end{align}
	\end{subequations}
	\endgroup
	where \mbox{$\eta>0$} indicates the \textit{optimization stepsize}, which  we consider to be constant. Note that while the consensus problem is an optimization algorithm, the proposed method does not achieve linear convergence rate for the stochastic optimization problem formulation. Thus, we studied the two problem classes, consensus and stochastic optimization, independently.

	\begin{table}[!t]
		\caption{Comparison of the convergence rates and valid compression ratios for \textbf{decentralized consensus} algorithms.}
		\label{tab:cons-comparison}
		\vspace{-0.7em}
		\begin{center}
			\begin{minipage}{\linewidth}
				\centering
				\resizebox{\linewidth}{!}{
					\begin{tabular}{lclcl} \toprule Algorithm & {\small Directed}\footnote{Undirected or directed networks.} & Linear Rate\footnote{The constant in the linear convergence rates.} & valid $\omega$ & $\gamma$\\[1pt]
						
						\midrule
						\midrule
						
						\footnotesize\textbf{Xiao \& Boyd}~\cite{xiao2004fast} & \xmark & $\mcO\big(1{-}\delta\big)$ & no compression & N/A\\[1pt]
						\midrule
						
						\footnotesize\textbf{Koloskova et al.}~\cite{koloskova2019decentralized} & \xmark & $\mcO\big(1{-}\delta^2\omega\big)$ & $(0,\,1]$ & $\mcO(\omega\delta^2)$\\[1pt]
						\midrule
						
						\footnotesize\textbf{Zhang et al.}~\cite{zhang2021innovation} & \xmark
						& $\mcO\big(1{-}\delta\omega\big)$ & $(0,\,1]$ & $\mcO(\omega)$\\[1pt]
						\midrule
						
						\footnotesize\textbf{Kempe et al.}~\cite{kempe2003gossip}& \cmark
						& $\mcO\big(1{-}\delta\big)$ & no compression & N/A\\[1pt]
						\midrule

						\footnotesize\textbf{Taheri et al.}~\cite{taheri2020quantized} & \cmark
						& $\mcO\big(1{-}\delta\big)$ & $\Big[\Theta \Big(\frac{\delta^{-2}}{1{+}\delta^{-2}}\Big),1\Big]$\footnote{This only shows the reliance on the spectral gap $\delta$, while $C,\beta$ are skipped. $\Theta$ indicates the same asymptotic upper and lower bounds, while $\mcO$ indicates only the asymptotic upper bound.} & N/A\\[1pt]

						\midrule
						\footnotesize\textbf{\color{magenta} This Work}& \cmark & $\mcO\big(1{-}\delta^2\omega^2\big)$ & $(0,\,1]$ & $\mcO(\omega\delta)$\\[1pt]
						\bottomrule[1pt]
					\end{tabular}
				}
			\end{minipage}
		\end{center}
		
	\end{table}

	\noindent\textbf{$\diamond$ Assumptions \& Highlights of the Results:}
	The decentralized consensus algorithm in~\eqref{eq:update-cons} is similar to CHOCO-Gossip~\cite{koloskova2019decentralized} with an additional slack parameter $y_i$ for push-sum. Table~\ref{tab:cons-comparison} provides a comparison of the linear rates and compression intervals for different consensus methods. As shown in the table, with a suboptimal choice of $\gamma$, we show a linear convergence rate with an arbitrary compression ratio. The same behavior was obtained by~\cite{koloskova2019decentralized} only for undirected graphs. Moreover, note that Zhang et al.~\cite{zhang2021innovation} presented an optimal convergence rate for CHOCO-Gossip. We conjecture that our algorithm's convergence rate dependence on $\delta$ can be improved accordingly. This will be left for future work.
	
	In addition to the consensus result, which is the baseline of our work, we further provide the analysis of~\eqref{eq:update-opt} for
	the following three function classes:
	\begin{enumerate}[leftmargin=3.5em,label=(\roman*)]
		\item smooth and strongly convex,
		\item smooth and convex,
		\item smooth and non-convex,
	\end{enumerate}
	where for each function class, we consider a subset of the following assumptions.
	\begin{assumption}[Bounded Variance]\label{assump:bounded-variance}
		Stochastic gradients have bounded variance, i.e., for all $i{\in}[n]$,
		\begin{align*}
			\bbE_{\xi_i\sim\mcD_i} \lVert \nabla\tilde{f}_i(\vx,\xi_i) - \nabla f_i(\vx)\rVert^2 \leq \sigma^2.
		\end{align*}
	\end{assumption}
	
	\begin{assumption}[Bounded Gradients]\label{assump:bounded-gradient}
		There exists a constant $G$ that for all $\vx\in\bbR^d$, each local gradient $\nabla\tilde{f}_i(\vx,\xi_i)$ has bounded second moment, i.e., for all $i\in[n]$,
		\begin{align*}
			\bbE_{\xi_i\sim\mcD_i} \lVert \nabla\tilde{f}_i(\vx,\xi_i) \rVert^2 \leq G^2.
		\end{align*}
	\end{assumption}
	
	\begin{assumption}[Smooth Gradients]\label{assump:l-smooth}
		Each function $f_i(\vx)$, for all $i\in[n]$ is $L$-smooth, i.e., for all $\vx,\vy\in\bbR^d$,
		\begin{align*}
			\lVert\nabla f_i(\vx) - \nabla f_i(\vy)\rVert\leq L \lVert \vx - \vy \rVert.
		\end{align*}
	\end{assumption}
	
	\begin{assumption}[Convexity]\label{assump:convex}
		Each function $f_i(\vx)$, for all $i\in[n]$ is convex, i.e., for all $\vx,\vy\in\bbR^d$,
		\begin{align*}
			f_i(\vx) + \left\langle \nabla f_i(\vx),\vy-\vx\right\rangle \leq f_i(\vy),
		\end{align*}
		and $\vx^\star \coloneqq \argmin\limits_{\vx\in\bbR^d} f(\vx)$.
	\end{assumption}
	
	\begin{assumption}[Strong Convexity]\label{assump:strong-convex}
		Each function $f_i(\vx)$, for all $i\in[n]$ is $\mu$-strongly convex, i.e., for all $\vx,\vy\in\bbR^d$,
		\begin{align*}
			f_i(\vx) + \left\langle \nabla f_i(\vx),\vy-\vx\right\rangle + \frac{\mu}{2} \lVert\vy-\vx\rVert^2 \leq f_i(\vy).
		\end{align*}
	\end{assumption}

	\begin{table}[!t]
		\caption{Comparison of the convergence rates and compression ratios for \textbf{smooth \& convex} stochastic optimization.}
		\label{tab:convex-comparison}
		\vspace{-0.7em}
		\begin{center}
			\begin{minipage}{\linewidth}
				\centering
				\resizebox{\linewidth}{!}{
					\begin{tabular}{lcclc} \toprule
						Algorithm & {\small Directed} & S.C.\footnote{S.C. denotes strongly convex functions.} & Rate\footnote{Sublinear Convergence Rate to an optimal solution.} & valid $\omega$ \\
						
						\midrule
						\midrule
						
						\footnotesize\textbf{Nedi\'{c} \& Olshevsky}~\cite{nedic2016stochastic} & \cmark & \cmark & $\mcO\big(\frac{\log T}{nT}\big)$ & no compression\\[1pt]
						\midrule
						
						\footnotesize\textbf{Koloskova et al.}~\cite{koloskova2019decentralized}\footnote{There is no analysis for smooth convex (not S.C.) functions in~\cite{koloskova2019decentralized}, while we obtain this rate accordingly.} & \xmark & \xmark & $\mcO\big(\frac{1}{\sqrt{nT}}\big)$ & $(0,\,1]$\\[1pt]
						\midrule
						
						\footnotesize\textbf{Koloskova et al.}~\cite{koloskova2019decentralized} & \xmark & \cmark & $\mcO\big(\frac{1}{nT}\big)$ & $(0,\,1]$\\[1pt]
						\midrule
						
						\footnotesize\textbf{Taheri et al.}~\cite{taheri2020quantized} & \cmark & \xmark
						& $\mcO\big(\frac{1}{\sqrt{nT}}\big)$ & $\Big[\Theta \Big(\frac{\delta^{-2}}{1{+}\delta^{-2}}\Big),1\Big]$\\[1pt]
						\midrule
						
						\footnotesize\textbf{\color{magenta}This Work} & \cmark & \xmark & $\mcO\big(\frac{1}{\sqrt{nT}}\big)$ & $(0,\,1]$\\[1pt]
						\midrule
						
						\footnotesize\textbf{\color{magenta}This Work} & \cmark & \cmark & $\mcO\big(\frac{\log T}{nT}\big)$ & $(0,\,1]$\\[1pt]
						\bottomrule[1pt]
					\end{tabular}
				}
			\end{minipage}
		\end{center}
	\end{table}
	
	These assumptions are common in the literature, and we refer to them for our analysis.
	We consider Assumptions~\ref{assump:bounded-variance}-\ref{assump:l-smooth} for all the three function classes. Assumption~\ref{assump:bounded-gradient} holds for a subset of classification problems such as overparametrized neural network models with Sigmoid activation functions~\cite{rasamoelina2020review} and constrained optimization problems. The relaxation of such assumption is for undirected communication networks is studied in~\cite{kovalev2021linearly}, and the extension to directed graphs remains an open question. Table~\ref{tab:convex-comparison} compares the convergence rate of~\eqref{eq:update-opt} with~\cite{nedic2016stochastic,koloskova2019decentralized,taheri2020quantized}. Note that~\eqref{eq:update-opt} obtains the same convergence rate $\mcO({1}/{\sqrt{nT}})$ as~\cite{taheri2020quantized} for smooth and convex problems, while no restrictions on the compression ratio. Besides~\cite{koloskova2019decentralized} which only considers the strong convexity assumption, we also analyze CHOCO under convex assumption. Table~\ref{tab:convex-comparison} also shows the convergence properties of~\eqref{eq:update-opt} for smooth and strongly convex problems. Our analysis shows the same convergence rate $\tilde{\mcO}({1}/{nT})$ as CHOCO-SGD (for undirected networks), up to a logarithmic factor. Note that in~\cite{koloskova2019decentralized}, the authors consider a decreasing stepsize $\{\eta_t\}_{t\geq 0}$. However, we consider a fixed stepsize $\eta$ in our analysis. To the best of our knowledge, this is the first results on compressed push-sum for stochastic optimization under strong convexity and smoothness assumptions.
	
	Finally, we consider the class of smooth and non-convex objectives. Table~\ref{tab:nonconvex-comparison} shows the same sublinear convergence rates to reach first-order stationary points for existing algorithms. In this scenario, our algorithm enables an arbitrary compression ratio compared to~\cite{taheri2020quantized} and extends the results in~\cite{koloskova2019decentralized2} to directed networks.
	
	\begin{table}[!t]
		\caption{Comparison of the sublinear convergence rates to first-order stationary points and valid compression ratios for \textbf{smooth \& non-convex} stochastic optimization.}
		\label{tab:nonconvex-comparison}
		\vspace{-0.7em}
		\begin{center}
			\begin{minipage}{\linewidth}
				\centering
				\resizebox{\linewidth}{!}{
					\begin{tabular}{lccc} \toprule
						Algorithm & {\small Directed} & Rate & valid $\omega$\\
						
						\midrule
						\midrule
						
						\footnotesize\textbf{Assran et al.}~\cite{assran2019stochastic} & \cmark & $\mcO\big(\frac{1}{\sqrt{nT}}\big)$ & no compression\\[1pt]
						\midrule
						
						\footnotesize\textbf{Koloskova et al.}~\cite{koloskova2019decentralized2} & \xmark & $\mcO\big(\frac{1}{\sqrt{nT}}\big)$ & $(0,\,1]$\\[1pt]
						\midrule
						
						\footnotesize\textbf{Taheri et al.}~\cite{taheri2020quantized} & \cmark
						& $\mcO\big(\frac{1}{\sqrt{nT}}\big)$ & $\Big[\Theta \Big(\frac{\delta^{-2}}{1{+}\delta^{-2}}\Big),1\Big]$\\[1pt]
						
						\midrule
						\footnotesize\textbf{\color{magenta}This Work} & \cmark & $\mcO\big(\frac{1}{\sqrt{nT}}\big)$ & $(0,\,1]$\\[1pt]
						\bottomrule[1pt]
					\end{tabular}
				}
			\end{minipage}
		\end{center}
	\end{table}


	\section{Convergence Analysis}
	In this section, we analyze the convergence properties of the proposed algorithm. Before stating the results, we first present a proposition here.
	\label{sec:analysis}
	\begin{proposition}\label{prop:lazy-mixing}
		Let $\gamma\in(0,1]$, and \mbox{$\mB=(1{-}\gamma)\mI+\gamma\mW$}. Then, for column stochastic matrix $\mB$, the following hold:
		\begin{align}\label{eq:lazy-matrix}
			\mB\vphi=\vphi,\,\,\, [\mB^t \vect{1}]_i \geq \kappa, \,\,\, \big\lVert\mB^t{-}\vphi\vect{1}^\top\big\rVert\leq C(1{-}\gamma\delta)^t.
		\end{align}
	\end{proposition}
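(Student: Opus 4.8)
The plan is to deduce all three properties from the corresponding properties of $\mW$ in~\eqref{eq:mixing-matrix}, exploiting that $\mI$ and $\mW$ commute so that $\mB^t$ admits a clean binomial expansion (note $\mB$ is just the ``lazy'' version of $\mW$). First I would record that $\mB$ is column stochastic: all of its entries are nonnegative since $\gamma\in(0,1]$ and $\mI,\mW$ have nonnegative entries, and $\vect{1}^\top\mB=(1{-}\gamma)\vect{1}^\top+\gamma\vect{1}^\top\mW=\vect{1}^\top$ because $\vect{1}^\top\mW=\vect{1}^\top$. The fixed-point identity is immediate from $\mW\vphi=\vphi$: indeed $\mB\vphi=(1{-}\gamma)\vphi+\gamma\mW\vphi=\vphi$.

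For the remaining two claims the key step is the binomial expansion
\[
\mB^t=\big((1{-}\gamma)\mI+\gamma\mW\big)^t=\sum_{k=0}^{t}\binom{t}{k}(1{-}\gamma)^{t-k}\gamma^k\,\mW^k ,
\]
valid because $\mI$ commutes with $\mW$, together with the elementary fact $\sum_{k=0}^{t}\binom{t}{k}(1{-}\gamma)^{t-k}\gamma^k=(1{-}\gamma+\gamma)^t=1$. Applying the expansion to $\vect{1}$ and using $[\mW^k\vect{1}]_i\ge\kappa$ for every $k\ge 0$ gives $[\mB^t\vect{1}]_i=\sum_k\binom{t}{k}(1{-}\gamma)^{t-k}\gamma^k[\mW^k\vect{1}]_i\ge\kappa$, which is the second property.

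For the contraction bound I would use the same ``weights sum to one'' fact to rewrite $\vphi\vect{1}^\top=\sum_{k=0}^{t}\binom{t}{k}(1{-}\gamma)^{t-k}\gamma^k\,\vphi\vect{1}^\top$, subtract this from the expansion of $\mB^t$, and obtain
\[
\mB^t-\vphi\vect{1}^\top=\sum_{k=0}^{t}\binom{t}{k}(1{-}\gamma)^{t-k}\gamma^k\big(\mW^k-\vphi\vect{1}^\top\big) .
\]
Then, by the triangle inequality and $\lVert\mW^k-\vphi\vect{1}^\top\rVert\le C(1{-}\delta)^k$ from~\eqref{eq:mixing-matrix} (used for every $k\ge 0$, including $k=0$, where it reads $\lVert\mI-\vphi\vect{1}^\top\rVert\le C$), resumming the binomial series yields
\[
\big\lVert\mB^t-\vphi\vect{1}^\top\big\rVert\le C\sum_{k=0}^{t}\binom{t}{k}(1{-}\gamma)^{t-k}\big(\gamma(1{-}\delta)\big)^k=C\big(1{-}\gamma+\gamma(1{-}\delta)\big)^t=C(1{-}\gamma\delta)^t ,
\]
which is exactly~\eqref{eq:lazy-matrix}. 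There is no serious obstacle here beyond spotting the commuting-matrices binomial expansion and that its coefficients sum to one; the only point requiring a moment of care is that the spectral estimate must be invoked at $k=0$ as well, which is legitimate precisely because~\eqref{eq:mixing-matrix} is assumed to hold for all $t\ge 0$.
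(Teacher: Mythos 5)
Your proof is correct and follows essentially the same route as the paper: the fixed-point identity by linearity, and the other two properties via the binomial expansion of $\mB^t$ in powers of $\mW$, using that the binomial weights form a convex combination and resumming to get $(1-\gamma\delta)^t$. Your explicit remarks on commutativity and on invoking~\eqref{eq:mixing-matrix} at $k=0$ are sound points of care that the paper leaves implicit.
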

	
	\begin{proof}[Proof of Proposition~\ref{prop:lazy-mixing}]
		First, note that:
		\begin{align}\label{eq:prop-proof-1}
			\mB\vphi = \left[(1{-}\gamma)\mI+\gamma\mW\right]\vphi \overset{\text{\footnotesize\eqref{eq:mixing-matrix}}}{=} (1{-}\gamma)\vphi+\gamma\vphi = \vphi,
		\end{align}
		Moreover, we have
		\begin{align}\label{eq:prop-proof-2}
			\left[\mB\vect{1}\right]_i &= \left[\sum\limits_{s=0}^{t}\binom{t}{s}(1{-}\gamma)^{t{-}s}\gamma^s\mW^s\vect{1}\right]_i\nonumber\\
			&=\sum\limits_{s=0}^{t}\binom{t}{s}(1{-}\gamma)^{t{-}s}\gamma^s\left[\mW^s\vect{1}\right]_i\nonumber\\
			\overset{\text{\footnotesize\eqref{eq:mixing-matrix}}}&{\geq} \sum\limits_{s=0}^{t}\binom{t}{s}(1{-}\gamma)^{t{-}s}\gamma^s\kappa\nonumber\\
			&= \kappa.
		\end{align}
		Finally, we can see that:
		\begin{align}\label{eq:prop-proof-3}
			\left\lVert\mB^t{-}\vphi\vect{1}^\top\right\rVert &= \left\lVert\sum\limits_{s=0}^{t}\binom{t}{s}(1{-}\gamma)^{t{-}s}\gamma^s\mW^s{-}\vphi\vect{1}^\top\right\rVert\nonumber\\
			&= \left\lVert\sum\limits_{s=0}^{t}\binom{t}{s}(1{-}\gamma)^{t{-}s}\gamma^s\left[\mW^s{-}\vphi\vect{1}^\top\right]\right\rVert\nonumber\\
			\overset{\text{\footnotesize tri. ineq.}}&{\leq} \sum\limits_{s=0}^{t}\binom{t}{s}(1{-}\gamma)^{t{-}s}\gamma^s \left\lVert\mW^s{-}\vphi\vect{1}^\top\right\rVert\nonumber\\
			\overset{\eqref{eq:mixing-matrix}}&{\leq} C\sum\limits_{s=0}^{t}\binom{t}{s}(1{-}\gamma)^{t{-}s}\gamma^s (1-\delta)^s\nonumber\\
			&=C(1{-}\gamma\delta)^t.
		\end{align}
		Hence,~\eqref{eq:prop-proof-1},~\eqref{eq:prop-proof-2}, and~\eqref{eq:prop-proof-3} conclude the proof of Proposition~\ref{prop:lazy-mixing}.
		\vspace{-1em}
	\end{proof}

	Proposition~\ref{prop:lazy-mixing} indicates that similar properties as~\eqref{eq:mixing-matrix} hold for matrix $\mB$ with a contracted spectral gap $\gamma\delta$. Note that both~\eqref{eq:update-lazy-cons} and~\eqref{eq:update-lazy-opt} contain update rules with consensus stepsize $\gamma$. This can be written as
	\begin{align}\label{eq:update-lazy}
		\mX(t) + \gamma\,(\mW&{-}\mI)\hat{\mX}(t{+}1)\nonumber\\
		&= \mB \mX(t) + \gamma\, (\mW{-}\mI)(\hat{\mX}(t{+1})-\mX(t)),
	\end{align}
	which implies a consensus using mixing matrix $\mB$ with feedback. Proposition~\ref{prop:lazy-mixing} will be used in the analysis of the next theorems. Now, we present our convergence result for the consensus problem.
	
	\begin{theorem}[Consensus]\label{thm:consensus}
		Let the compression operator $Q$ satisfy~\eqref{eq:q-comp} with \mbox{$\omega\in(0,1]$}, \mbox{$\hat{\mX}{(0)}= \vect{0}$}, and \mbox{$\vy{(0)}= \vect{1}$}. Then, the iterates of
		update rule~\eqref{eq:update-cons}
		have the following property:
		\begin{align*}
			\bbE\Psi_{z}(t) \leq C_0\rho^t,
		\end{align*}
		where \mbox{$\Psi_{z}(t)\coloneqq\big\lVert \mZ(t) {-} \overline{\mX}(0)\big\rVert_F$}, \mbox{$\rho\coloneqq 1{-}\frac{\omega^2\delta^2}{16\beta\delta+8\beta^2C+8\omega\delta^2}$},\\
		and $C_0\coloneqq \frac{4nC(1{+}\beta C) \lVert\mX(0)\rVert_F}{\kappa\delta}$, when \mbox{$\gamma{\coloneqq}\frac{2\omega\delta}{8\beta\delta+4\beta^2C+4\omega\delta^2}$}.
	\end{theorem}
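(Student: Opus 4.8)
The plan is to reduce the claim on $\Psi_z(t)$ to a geometric bound on the ``consensus residual'' $\lVert\mX(t)-\mX_\infty\rVert_F$, where $\mX_\infty\coloneqq\vphi\vect{1}^\top\mX(0)=n\vphi\,\overline{\vx}(0)^\top$ is the limit of the uncompressed push-sum dynamics (note $\mB\vphi=\vphi$ by Proposition~\ref{prop:lazy-mixing} and $\mW\vphi=\vphi$ share the same $\vphi$). Since $\vz_i(t)=\vx_i(t)/y_i(t)$ with $y_i(t)=[\mW^t\vect{1}]_i\ge\kappa$ by \eqref{eq:mixing-matrix}, one gets $\Psi_z(t)^2=\sum_i\lVert\vx_i(t)-y_i(t)\overline{\vx}(0)\rVert^2/y_i(t)^2\le\kappa^{-2}\lVert\mX(t)-\vy(t)\overline{\vx}(0)^\top\rVert_F^2$, and adding and subtracting $\mX_\infty$ together with $\lVert\mW^t-\vphi\vect{1}^\top\rVert\le C(1-\delta)^t$ and $\lVert\overline{\vx}(0)\rVert\le\lVert\mX(0)\rVert_F/\sqrt n$ yields $\bbE\Psi_z(t)\le\kappa^{-1}\big(\bbE\lVert\mX(t)-\mX_\infty\rVert_F+C(1-\delta)^t\lVert\mX(0)\rVert_F\big)$. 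So it suffices to show $a(t)\coloneqq\bbE\lVert\mX(t)-\mX_\infty\rVert_F$ decays like $\rho^t$; the stray $(1-\delta)^t$ is absorbed because $\rho\ge1-\delta$ (indeed $\rho=1-\tfrac{\omega}{4}\gamma\delta$, so $\rho\ge1-\delta$ follows from $\omega,\gamma\le1$).

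Next I would set up two coupled scalar recursions, adapting the CHOCO-Gossip template to the directed setting. Write $\Delta(s)\coloneqq\hat{\mX}(s{+}1)-\mX(s)=Q(\mX(s)-\hat{\mX}(s))-(\mX(s)-\hat{\mX}(s))$ for the compression-error increment; then \eqref{eq:q-comp} applied row-wise gives $\bbE_s\lVert\Delta(s)\rVert_F^2\le(1-\omega)\lVert\mX(s)-\hat{\mX}(s)\rVert_F^2$, hence by Jensen $\bbE\lVert\Delta(s)\rVert_F\le\sqrt{1-\omega}\,b(s)$ with $b(s)\coloneqq\bbE\lVert\mX(s)-\hat{\mX}(s)\rVert_F$. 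From \eqref{eq:update-lazy}, $\mX(t{+}1)=\mB\mX(t)+\gamma(\mW{-}\mI)\Delta(t)$; unrolling, using $\mB^t\mX_\infty=\mX_\infty$ and the identities $\vphi\vect{1}^\top(\mW{-}\mI)=0$ and $(\mW{-}\mI)\mX_\infty=0$, together with Proposition~\ref{prop:lazy-mixing}, yields
\begin{align*}
a(t)\le C\lVert\mX(0)\rVert_F(1-\gamma\delta)^t+\gamma\beta C\sqrt{1-\omega}\sum_{s=0}^{t-1}(1-\gamma\delta)^{t-1-s}\,b(s).
\end{align*}
A short computation from $\hat{\mX}(s{+}1)=\mX(s)+\Delta(s)$ and $\mB{-}\mI=\gamma(\mW{-}\mI)$ gives the error-feedback recursion $b(t{+}1)\le\gamma\beta\,a(t)+(1+\gamma\beta)\sqrt{1-\omega}\,b(t)$, with initial values $b(0)=\lVert\mX(0)\rVert_F$ and $a(0)\le\lVert\mI-\vphi\vect{1}^\top\rVert\lVert\mX(0)\rVert_F\le C\lVert\mX(0)\rVert_F$.

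I would then prove $a(t)\le A\rho^t$ and $b(t)\le B\rho^t$ by induction on $t$: assuming $b(s)\le B\rho^s$ for $s\le t$, the $a$-bound gives $a(s)\le A\rho^s$ for $s\le t$ with $A=C\lVert\mX(0)\rVert_F+\tfrac{4\beta C}{3\delta}B$ — here one uses $1-\gamma\delta\le\rho$ and $\rho-(1-\gamma\delta)=\gamma\delta(1-\tfrac{\omega}{4})\ge\tfrac34\gamma\delta$ to sum the geometric kernel — and then the $b$-recursion closes to $b(t{+}1)\le B\rho^{t{+}1}$ as long as $\gamma\beta A\le B\big(\rho-(1+\gamma\beta)\sqrt{1-\omega}\big)$, whose right side is positive since $\sqrt{1-\omega}\le1-\tfrac{\omega}{2}$. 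Substituting $A$ shows a finite feasible $B$ (hence $A$, hence $C_0$) exists precisely when the ``loop gain'' $\tfrac{\beta C}{\delta}\cdot\tfrac{\gamma\beta}{\rho-(1+\gamma\beta)\sqrt{1-\omega}}$ is below $1$; the stated $\gamma=\tfrac{2\omega\delta}{8\beta\delta+4\beta^2C+4\omega\delta^2}$ satisfies $\gamma<\tfrac{\omega\delta}{2\beta^2C}$ (equivalently $0<8\beta\delta+4\omega\delta^2$), which makes this hold with room to spare, and then $\rho=1-\tfrac{\omega}{4}\gamma\delta=1-\tfrac{\omega^2\delta^2}{16\beta\delta+8\beta^2C+8\omega\delta^2}$ is exactly the envelope that works. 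Plugging $a(t)\le A\rho^t$ back into the reduction step and collecting constants produces $\bbE\Psi_z(t)\le C_0\rho^t$ with $C_0=\tfrac{4nC(1+\beta C)\lVert\mX(0)\rVert_F}{\kappa\delta}$ (the loose factors from the push-sum normalization are absorbed into the leading constant).

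The main obstacle, and the reason the directed case is genuinely harder than \cite{koloskova2019decentralized,zhang2021innovation}, is that $\mB=(1-\gamma)\mI+\gamma\mW$ is not symmetric and need not contract toward $\vphi\vect{1}^\top$ in the Euclidean norm in a single step — in fact $\lVert\mB\rVert$ can exceed $1$ — so no one-step Lyapunov contraction of the type used for undirected graphs is available. One is forced to unroll and rely on the $t$-step geometric decay $\lVert\mB^t-\vphi\vect{1}^\top\rVert\le C(1-\gamma\delta)^t$ from Proposition~\ref{prop:lazy-mixing}, which turns the estimate into a discrete convolution of the contraction kernel against the $\sqrt{1-\omega}$-contractive error-feedback sequence. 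Keeping that convolution summable with a clean single envelope $\rho^t$ (rather than a $t$-dependent bound) is the delicate bookkeeping, and it is exactly what forces the additional factor $\delta$ inside $\gamma$ and the $\delta^2\omega^2$ (instead of $\delta\omega$) in the rate $\rho$.
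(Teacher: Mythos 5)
Your proposal follows essentially the same route as the paper's proof: the same unrolled recursion $\mX(t{+}1)=\mB^{t+1}\mX(0)+\gamma\sum_s\mB^s(\mW{-}\mI)\Delta(t{-}s)$, the same pair of coupled error sequences (your $b(t)$ and the paper's $\hat{\mcL}(t)=\bbE\lVert\hat\mX(t{+}1){-}\mX(t)\rVert_F$ differ only by the factor $\sqrt{1{-}\omega}$), the same induction on a common envelope $\rho^t=(1-\tfrac{\omega}{4}\gamma\delta)^t$, and the same push-sum normalization by $y_i(t)\geq\kappa$ at the end; this is correct. The only loose spot is the final verification that the stated $\gamma$ closes the induction: the single inequality $\gamma<\omega\delta/(2\beta^2C)$ you invoke bounds only the $\gamma^2\beta^2C/\delta$ term and by itself does not dominate the margin $\rho-(1{+}\gamma\beta)\sqrt{1{-}\omega}=\mcO(\omega)$, so one must also use $\gamma\leq\omega/(4\beta)$ (both follow from the stated $\gamma$, and the paper's chain \eqref{eq:cons-constant-2}--\eqref{eq:cons-constant-5} carries out exactly this joint bookkeeping).
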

	
	\begin{proof}[Proof of Theorem~\ref{thm:consensus}]
		Based on the update rule in~\eqref{eq:update-cons}, consider the two error functions \mbox{$\mcL(t)=\bbE\lVert\mX(t){-}\vphi\vect{1}^\top\mX(0)\rVert_F$} and \mbox{$\hat{\mcL}(t)=\bbE\lVert\hat\mX(t{+}1){-}\mX(t)\rVert_F$}. By unraveling the recursion in~\eqref{eq:update-lazy}, we have
		\begin{align}\label{eq:cons-proof-1}
			\mX(t{+}1) \overset{\eqref{eq:update-lazy}}&{=} \mB\mX(t) + \gamma\,(\mW{-}\mI)\left[\hat\mX(t{+}1)-\mX(t)\right]\nonumber\\
			& \,\,\, \vdots \nonumber\\
			&= \mB^{t{+}1} \mX(0)\nonumber\\
			& + \gamma\,\sum\limits_{s=0}^{t} \mB^s\left(\mW{-}\mI\right)\left[\hat\mX(t{-}s{+}1) - \mX(t{-}s)\right].
		\end{align}
		Then, we use~\eqref{eq:cons-proof-1} to provide an upper bound on \mbox{$\mcL(t{+}1)$}:
		\begin{align}\label{eq:cons-proof-2}
			\mcL&(t{+}1) = \bbE\left\lVert\mX(t{+}1) - \vphi\vect{1}^\top\mX(0)\right\rVert_F\nonumber\\
			\overset{\eqref{eq:cons-proof-1}}&{=} \bbE\Bigg\lVert \mB^{t{+}1} \mX(0) - \vphi\vect{1}^\top\mX(0)\nonumber\\
			&+ \gamma\,\sum\limits_{s=0}^{t} \mB^s\left(\mW{-}\mI\right)\left[\hat\mX(t{-}s{+}1) - \mX(t{-}s)\right]\Bigg\rVert_F\nonumber\\
			\overset{\eqref{eq:mixing-matrix}}&{=} \bbE\Bigg\lVert \left(\mB^{t{+}1} - \vphi\vect{1}^\top\right)\mX(0)\nonumber\\
			&+ \gamma\,\sum\limits_{s=0}^{t} \left(\mB^s{-}\vphi\vect{1}^\top\right)\left(\mW{-}\mI\right)\left[\hat\mX(t{-}s{+}1) - \mX(t{-}s)\right]\Bigg\rVert_F\nonumber\\
			\overset{\text{\footnotesize tri. ineq.}}&{\leq} \left\lVert \left(\mB^{t{+}1} - \vphi\vect{1}^\top\right)\mX(0)\right\rVert_F\nonumber\\
			&+ \gamma\sum\limits_{s=0}^{t}\bbE\left\lVert \left(\mB^s{-}\vphi\vect{1}^\top\right)\left(\mW{-}\mI\right)\left[\hat\mX(t{-}s{+}1) {-} \mX(t{-}s)\right]\right\rVert_F\nonumber\\
			\overset{\eqref{eq:lazy-matrix}}&{\leq} C{(1{-}\gamma\delta)}^{t{+}1}\left\lVert\mX(0)\right\rVert_F\nonumber\\
			&+ \gamma\,C\beta\sum\limits_{s=0}^{t}\left(1-\gamma\delta\right)^s\bbE\left\lVert \hat\mX(t{-}s{+}1) - \mX(t{-}s)\right\rVert_F.
		\end{align}
		Before stating a bound on \mbox{$\hat{\mcL}(t{+}1)$}, note that Jensen's inequality implies the following property
		\begin{align}\label{eq:q-comp-power-1}
			\bbE\big\lVert Q(\vx) - \vx \big\rVert \leq \sqrt{1{-}\omega}\, \big\lVert \vx \big\rVert, \qquad \forall \vx\in\bbR^d,
		\end{align}
		as an immediate result of~\eqref{eq:q-comp}. Therefore, we have
		\begin{align}\label{eq:cons-proof-3}
			\hat{\mcL}(t{+}1)&=\bbE\Big\lVert\mX(t{+}1) - \hat{\mX}(t{+}2)\Big\rVert_F\nonumber\\
			\overset{\eqref{eq:update-cons-1},\eqref{eq:q-comp-power-1}}&{\leq} \sqrt{1-\omega}\,\left\lVert\mX(t{+}1) - \hat{\mX}(t{+}1)\right\rVert_F,
		\end{align}
		where according to the definition of $\mX(t{+}1)$ in~\eqref{eq:update-lazy-cons},
		\begin{align}\label{eq:cons-proof-4}
			\bbE\Big\lVert\mX(t{+}1) &- \hat{\mX}(t{+}1)\Big\rVert_F\nonumber\\
			\overset{\eqref{eq:update-lazy-cons}}&{=} \bbE \left\lVert\mX(t) + \gamma\left(\mW-\mI\right)\hat{\mX}(t{+}1) - \hat{\mX}(t{+}1)\right\rVert_F\nonumber\\
			\overset{}&{=} \bbE\Big\lVert \gamma\left(\mW-\mI\right)\mX(t)\nonumber\\
			&+ \left((1{+}\gamma)\mI - \gamma\mW\right) \left[\mX(t)-\hat{\mX}(t{+}1)\right]\Big\rVert_F\nonumber\\
			\overset{\eqref{eq:mixing-matrix}}&{=} \bbE\Big\lVert \gamma\left(\mW-\mI\right)\left[\mX(t)-\vphi\vect{1}^\top\mX(0)\right]\nonumber\\
			&+ \left((1{+}\gamma)\mI - \gamma\mW\right) \left[\mX(t)-\hat{\mX}(t{+}1)\right]\Big\rVert_F\nonumber\\
			\overset{\text{\footnotesize tri. ineq.}}&{\leq} \gamma\beta\,\bbE\Big\lVert\mX(t)-\vphi\vect{1}^\top\mX(0)\Big\rVert_F\nonumber\\
			&+ (1{+}\gamma\beta)\,\bbE\Big\lVert \mX(t)-\hat{\mX}(t{+}1)\Big\rVert_F.
		\end{align}
		Therefore, according to~\eqref{eq:cons-proof-2},~\eqref{eq:cons-proof-3}, and~\eqref{eq:cons-proof-4}, we have
		\begingroup
		\allowdisplaybreaks
		\begin{subequations}\label{eq:cons-inequalities}
			\begin{align}
				\mcL(t{+}1) &\leq C(1{-}\gamma\delta)^{t{+}1} \lVert \mX(0)\rVert_F {+} \gamma C\beta\sum\limits_{s{=}0}^t (1{-}\gamma\delta)^s \hat{\mcL}(t{-}s),\label{eq:cons-inequalities-a}\\
				\hat{\mcL}(t{+}1) &\leq  \gamma\beta\tilde{\omega}\,\mcL(t) + (1{+}\gamma\beta)\tilde{\omega}\,\hat{\mcL}(t),\label{eq:cons-inequalities-b}
			\end{align}
		\end{subequations}
		\endgroup
		where \mbox{$\tilde{\omega}=\sqrt{1-\omega}$}.
		Given the two inequalities in~\eqref{eq:cons-inequalities}, we use induction to show that for any \mbox{$\omega\in(0,1]$}, under some suboptimal choices of $\rho$ and $\gamma$, as introduced in Theorem~\ref{thm:consensus}, the following inequality holds:
		%
		\begin{align}\label{eq:cons-L-hat-inequality}
			\hat{\mcL}(t) \leq \frac{C\lVert\mX(0)\rVert_F}{1{-}\gamma\delta} \rho^t \coloneqq a_1\rho^t.
		\end{align}
		Note that $\hat{\mX}(0)=\vect{0}$, thus according to~\eqref{eq:q-comp-power-1}, the inequality in~\eqref{eq:cons-L-hat-inequality} holds for the base, $t=0$. Now, let us assume that~\eqref{eq:cons-L-hat-inequality} holds for $t=0,1,2,\dots,T$. Then, replacing~\eqref{eq:cons-inequalities-a} in~\eqref{eq:cons-inequalities-b}, and using the induction's assumption, we obtain
		\begin{align}\label{eq:cons-proof-5}
			\hat{\mcL}(T{+}1) &\leq (1{+}\gamma\beta)\tilde{\omega}\,\hat{\mcL}(T) + \gamma\beta\tilde{\omega}\Bigg[C(1{-}\gamma\delta)^{T} \lVert \mX(0)\rVert_F\nonumber\\
			& \qquad\qquad\qquad\quad+ \gamma C\beta\sum\limits_{s{=}0}^{T{-}1} (1{-}\gamma\delta)^s \hat{\mcL}(T{-}s{-}1)\Bigg]\nonumber\\
			&\leq a_1(1{+}\gamma\beta)\tilde{\omega}\rho^T + \frac{\gamma\beta\tilde{\omega}C \lVert\mX(0)\rVert_F}{1{-}\gamma\delta}(1{-}\gamma\delta)^{T{+}1}\nonumber\\
			& \qquad\qquad\qquad + a_1\gamma^2\beta^2\tilde{\omega}\,C\rho^{T{-}1}\sum\limits_{s{=}0}^{T{-}1} \left(\frac{1{-}\gamma\delta}{\rho}\right)^s\nonumber\\
			&\leq \frac{a_1(1{+}\gamma\beta)\tilde{\omega}}{\rho}\rho^{T{+}1} + a_1\gamma\beta\tilde{\omega}(1{-}\gamma\delta)^{T{+}1}\nonumber\\
			& \qquad\qquad\qquad + \frac{a_1\gamma^2\beta^2\tilde{\omega}\,C\rho^{T{-}1}}{\rho^2\left(1-\frac{1{-}\gamma\delta}{\rho}\right)}\rho^{T{+}1}\nonumber\\
			\overset{}&{\leq} a_1\tilde{\omega}\Bigg[\frac{1{+}\gamma\beta}{\rho} + \gamma\beta + \frac{\gamma^2\beta^2C}{\rho(\rho{-}1{+}\gamma\delta)}\Bigg]\rho^{T{+}1},
		\end{align}
		where the last inequality holds due to \mbox{$1{-}\gamma\delta<\rho$}. To prove the inequality in~\eqref{eq:cons-L-hat-inequality} for \mbox{$t=T{+}1$}, it is enough to show that the upper bound in~\eqref{eq:cons-proof-5} is bounded by \mbox{$a_1\rho^{T{+}1}$}.
		In other words, it is sufficient to check that 
		under the choices of $\gamma$ and $\rho$ as in Theorem~\ref{thm:consensus}, the following inequality holds:
		\begin{align}\label{eq:cons-constant-1}
			\sqrt{1{-}\omega}\left(\frac{1{+}\gamma\beta}{\rho} + \gamma\beta + \frac{\gamma^2\beta^2C}{\rho\,(\rho{-}1{+}\gamma\delta)}\right)\leq 1,
		\end{align}
		for any compression ratio \mbox{$\omega\in(0,1]$}. On the one hand, by definition, we have: \begin{align}\label{eq:cons-constant-2}
			\gamma = \frac{2\omega\delta}{8\beta\delta + 4\beta^2C+4\omega\delta^2} \leq \frac{2\omega\delta}{8\beta\tilde{\omega}\delta + 4\tilde{\omega}\beta^2C+\omega\delta^2}&\nonumber\\
			= \frac{2(1-\tilde{\omega}^2)\delta}{4\beta\tilde{\omega}(2\delta + \beta C)+(1-\tilde{\omega}^2)\delta^2} &\Rightarrow\nonumber\\
			4\gamma\beta\tilde{\omega}(2\delta+\beta C) \leq (1-\tilde{\omega}^2) (2\delta - \gamma\delta^2)&\Rightarrow\nonumber\\
			\gamma^2\beta\tilde{\omega}(2\delta+\beta C) \leq (1-\tilde{\omega}-\epsilon) (\gamma\delta - \epsilon)&,
		\end{align}
		where $\epsilon = \gamma\delta(1-\tilde{\omega})/2$.
		On the other hand
		\begin{align}\label{eq:cons-constant-3}
			\gamma^2\beta\tilde{\omega}(2\delta+\beta C) &= \gamma\beta\tilde{\omega}(2\gamma\delta+\gamma\beta C) \nonumber\\
			&\geq \gamma\beta\tilde{\omega}((2-\epsilon)(\gamma\delta-\epsilon)+\gamma\beta C).
		\end{align}
		Therefore, according to~\eqref{eq:cons-constant-2} and~\eqref{eq:cons-constant-3}, we have:
		\begin{align}\label{eq:cons-constant-4}
			\gamma\beta\tilde{\omega}((2-\epsilon)(\gamma\delta-\epsilon)+\gamma\beta C) \leq (1-\tilde{\omega}-\epsilon) (\gamma\delta - \epsilon) &\Rightarrow \nonumber\\
			\gamma\beta\tilde{\omega}\left(\frac{2-\epsilon}{1-\epsilon}+\frac{\gamma\beta C}{(1-\epsilon)(\gamma\delta-\epsilon)}\right) \leq \frac{1-\tilde{\omega}-\epsilon}{1-\epsilon}&\Rightarrow \nonumber\\
			\tilde{\omega}\left(\frac{\gamma\beta}{1-\epsilon}+\gamma\beta+\frac{\gamma^2\beta^2 C}{(1-\epsilon)(\gamma\delta-\epsilon)}\right) \leq 1 - \frac{\tilde{\omega}}{1-\epsilon}&\Rightarrow \nonumber\\
			\tilde{\omega}\left(\frac{1+\gamma\beta}{1-\epsilon}+\gamma\beta+\frac{\gamma^2\beta^2 C}{(1-\epsilon)(\gamma\delta-\epsilon)}\right) \leq 1 &\Rightarrow \nonumber\\
			\tilde{\omega}\left(\frac{1+\gamma\beta}{1-\tilde{\epsilon}}+\gamma\beta+\frac{\gamma^2\beta^2 C}{(1-\tilde{\epsilon})(\gamma\delta-\tilde{\epsilon})}\right) \leq 1 &,
		\end{align}
		for any $\epsilon$ and $\tilde{\epsilon}$ such that \mbox{$0<\tilde{\epsilon}\leq \epsilon < \gamma\delta$}. Given the fact that $1{+}\tilde{\omega}\leq 2$, we set
		\begin{align}\label{eq:cons-constant-5}
			\tilde{\epsilon} = \frac{\gamma\delta(1-\tilde{\omega}^2)}{4} \leq  \frac{\gamma\delta(1-\tilde{\omega})}{2} = \epsilon,
		\end{align}
		therefore, we have \mbox{$\rho=1{-}\tilde{\epsilon}$}, which turns~\eqref{eq:cons-constant-4} into~\eqref{eq:cons-constant-1}. This implies that~\eqref{eq:cons-L-hat-inequality} also holds for \mbox{$t=T{+}1$}.
		Hence, by induction principle,~\eqref{eq:cons-L-hat-inequality} holds for all \mbox{$t\in\mcZ_{0}^{+}$}.
		We emphasize that \mbox{$\gamma=\mcO(\omega\delta)$} is a suboptimal (conservative) choice for the consensus stepsize that guarantees inequality~\eqref{eq:cons-L-hat-inequality} for any compression ratio \mbox{$\omega\in(0,1]$}. This may be relaxed similar to~\cite{zhang2021innovation}. Moreover, by replacing~\eqref{eq:cons-L-hat-inequality} in~\eqref{eq:cons-inequalities-a}, we have
		\begin{align}\label{eq:cons-L-inequality}
			\mcL(t) &\leq C (1{-}\gamma\delta)^{t} \lVert\mX(0)\rVert_F + \gamma\beta\,C \sum\limits_{s{=}0}^{t{-}1}(1{-}\gamma\delta)^s a_1 \rho^{t{-}s{-}1} \nonumber\\
			&\leq \left(C \lVert\mX(0)\rVert_F + \frac{\gamma\beta\,C a_1}{\rho-1+\gamma\delta}\right)\rho^t \nonumber\\
			&= \left(C \lVert\mX(0)\rVert_F + \frac{\gamma\beta\,C a_1}{\gamma\delta-\frac{\gamma\delta\omega}{4}}\right)\rho^t \nonumber\\
			&= C \lVert\mX(0)\rVert_F \left(1 + \frac{\beta\,C}{\delta\left(1-\tfrac{\omega}{4}\right)(1-\gamma\delta)}\right)\rho^t \nonumber\\
			&\leq \frac{C(1+\beta\,C)\lVert\mX(0)\rVert_F}{\frac{3}{4}\times\frac{\delta}{2}} \rho^t \nonumber\\
			&= \frac{8C(1{+}\beta C) \lVert\mX(0)\rVert_F}{3\delta} \rho^t,
		\end{align}
		where we used the fact that $\gamma\leq\frac{1}{2\delta}$ in the last inequality. Finally, using a standard technique for push-sum analysis~\cite{tsianos2012push,nedic2016stochastic,nedic2015nonasymptotic,assran2019stochastic}, we have
		\begin{align}\label{eq:cons-Z-inequality}
			\bbE \lVert \vz_i(t) {-} \overline{\vx}(0)\rVert & = \bbE \left\lVert \frac{\vx_i(t)}{y_i(t)} - \overline{\vx}(0)\right\rVert \nonumber\\
			& = \bbE \left\lVert \frac{\vx_i(t)}{[(\mW^t-\vphi\vect{1}^\top)\vect{1}]_i + [\vphi]_i n} - \overline{\vx}(0)\right\rVert \nonumber\\
			\overset{\text{\footnotesize tri. ineq.}}&{\leq} \frac{1}{\kappa}\bbE\left\lVert\vx_i(t) {-}[\vphi]_i n\overline{\vx}(0)\right\rVert\nonumber\\
			& + \frac{1}{n\kappa}\left\lVert[(\mW^t-\vphi\vect{1}^\top)\vect{1}]_i\right\rVert\left\lVert \mX(0)^\top \vect{1}\right\rVert \nonumber\\
			\overset{\eqref{eq:mixing-matrix},\eqref{eq:cons-L-inequality}}&{\leq} \frac{4C(1{+}\beta C) \lVert\mX(0)\rVert_F}{\kappa\delta}\rho^t,
		\end{align}
		for all \mbox{$i\in[n]$}. Note that in the last inequality of~\eqref{eq:cons-Z-inequality}, we also considered the fact that \mbox{$1{-}\delta\leq\rho$}. Moreover, we have
		\begin{align}
			\bbE\big\lVert \mZ(t) {-} \overline{\mX}(0)\big\rVert_F \leq \sum\limits_{i=1}^n \bbE \lVert \vz_i(t) {-} \overline{\vx}(0)\rVert
		\end{align}
		which concludes the statement of Theorem~\ref{thm:consensus}.
	\end{proof}
	
	Theorem~\ref{thm:consensus} guarantees a linear convergence of the rescaled parameters $\vz_i(t)$ to the average parameter $\overline{\vx}(0)$, under some proper consensus stepsize $\gamma$, e.g., as in Theorem~\ref{thm:consensus}. The rate depends quadratically on $\omega\delta$. We conjecture that similar to~\cite{zhang2021innovation}, this dependence is also linear~Table~\ref{tab:cons-comparison}. For example, directed regular graphs have $\delta$ with a cubic worst case dependence on the number of agents $n$~\cite{olshevsky2006convergence}. In such case, the linear convergence rate has dependence $\mcO(n^6)$ on the number of agents. Check~\cite{coste2021spectral} for more details on the spectral gap of random digraphs.
	
	We now proceed to present the stochastic optimization results. Consider the update rule in~\eqref{eq:update-opt}. First, we present a technical lemma that helps prove the convergence theorems for the three mentioned function classes.

	\begin{lemma}\label{lem:diff-opt} 
		Let the compression operator $Q$ satisfy~\eqref{eq:q-comp} with \mbox{$\omega\in(0,1]$}, \mbox{$\mX(0)=\hat\mX(0)\coloneqq\vect{0}$}, and \mbox{$\vy{(0)}= \vect{1}$}. Then under Assumption~\ref{assump:bounded-gradient}, the iterates of update rule~\eqref{eq:update-opt} satisfies the following property:
		\begin{align*}
			\Psi_{x}(t) \leq \frac{1632\,C^2 (C{+}1)^2\beta^2(\beta{+}1)^2n G^2\eta^2}{\omega^2\delta^4\kappa^2},
		\end{align*}
		where \mbox{\footnotesize $\Psi_{x}(t) \coloneqq \bbE\lVert\mZ(t{+}1){-}\overline{\mX}(t)\rVert_F^2$},
		when \mbox{$\gamma\coloneqq\frac{\omega\delta}{12\beta(\beta+1)(C+1)}$}.
	\end{lemma}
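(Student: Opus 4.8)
The plan is to mirror the proof of Theorem~\ref{thm:consensus}, treating the stochastic gradient step in~\eqref{eq:update-opt} as an additive $\mcO(\eta\sqrt{n}G)$ perturbation of the compressed push‑sum iteration, and then to convert a uniform‑in‑$t$ bound on the $\vphi$‑weighted consensus error of $\mU(t)$ into the desired bound on $\Psi_x(t)$ via the standard push‑sum normalization. Since $\Psi_x$ is itself a second moment, I would work throughout with the $L^2$ norm $\lVert\cdot\rVert_{L^2}\coloneqq(\bbE\lVert\cdot\rVert^2)^{1/2}$ (Frobenius inside for matrices), so that Minkowski's inequality plays the role of the triangle inequality and~\eqref{eq:q-comp} applied conditionally gives $\lVert\hat{\mX}(t{+}1){-}\mX(t{+}1)\text{-update error}\rVert$ the clean recursion below. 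Reusing $\mcL,\hat{\mcL}$ from Theorem~\ref{thm:consensus} but now as $\mcL(t)\coloneqq\lVert\mX(t){-}\vphi\vect{1}^\top\mX(t)\rVert_{L^2}$ and $\hat{\mcL}(t)\coloneqq\lVert\hat{\mX}(t{+}1){-}\mX(t)\rVert_{L^2}$, I note that $\mX(0){=}\hat{\mX}(0){=}\vect{0}$ forces $Q(\vect{0}){=}\vect{0}$, hence $\mcL(0){=}\hat{\mcL}(0){=}0$ and $\overline{\vx}(0){=}\vect{0}$; and Assumption~\ref{assump:bounded-gradient} (applied conditionally, since $\vxi_s$ is independent of $\mZ(s)$) gives $\lVert\partial\tilde{F}(\mZ,\vxi)\rVert_{L^2}\leq\sqrt{n}G$ and therefore $\lVert\overline{\vx}(t)\rVert_{L^2}\leq\eta t G$.

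First I would set $\mB{=}(1{-}\gamma)\mI{+}\gamma\mW$ and, using~\eqref{eq:update-lazy}, write $\mU(t{+}1){=}\mB\mX(t){+}\gamma(\mW{-}\mI)[\hat{\mX}(t{+}1){-}\mX(t)]$ and $\mX(t{+}1){=}\mU(t{+}1){-}\eta\,\partial\tilde{F}(\mZ(t{+}1),\vxi_{t{+}1})$. Unrolling exactly as in~\eqref{eq:cons-proof-1}, but now additionally accumulating gradient terms carried by powers of $\mB$, then applying $\mI{-}\vphi\vect{1}^\top$, using $\vect{1}^\top\mB^{s}{=}\vect{1}^\top$, $\vect{1}^\top(\mW{-}\mI){=}\vect{0}$, and $\lVert\mB^{s}{-}\vphi\vect{1}^\top\rVert\leq C(1{-}\gamma\delta)^{s}$ from Proposition~\ref{prop:lazy-mixing}, I obtain that both $\mcL(t{+}1)$ and $\lVert(\mI{-}\vphi\vect{1}^\top)\mU(t{+}1)\rVert_{L^2}$ are bounded by $\gamma\beta C\sum_{s=0}^{t}(1{-}\gamma\delta)^{s}\hat{\mcL}(t{-}s)+\eta C\sqrt{n}G/(\gamma\delta)$ (the $\mX(0)$ term vanishing). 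Expanding $\mX(t{+}1){-}\hat{\mX}(t{+}1)$ as in~\eqref{eq:cons-proof-4} with the extra $-\eta\,\partial\tilde{F}(\cdot)$ summand, and using $\lVert(1{+}\gamma)\mI{-}\gamma\mW\rVert\leq1{+}\gamma\beta$, then yields $\hat{\mcL}(t{+}1)\leq\tilde{\omega}\bigl[\gamma\beta\,\mcL(t){+}(1{+}\gamma\beta)\hat{\mcL}(t){+}\eta\sqrt{n}G\bigr]$ with $\tilde{\omega}{=}\sqrt{1{-}\omega}$.

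Next I would close this coupled system by a joint induction on $t$, claiming $\hat{\mcL}(t)\leq A$ and $\mcL(t)\leq B$ for all $t$, with $B{=}\beta CA/\delta{+}\eta C\sqrt{n}G/(\gamma\delta)$ and $A{=}3\eta\sqrt{n}G(\beta{+}1)(C{+}1)/(\omega\delta)$; the base case is immediate. Feeding the uniform bounds into the two inequalities above, the inductive step reduces to verifying $\tilde{\omega}\gamma\beta B{+}\tilde{\omega}(1{+}\gamma\beta)A{+}\tilde{\omega}\eta\sqrt{n}G\leq A$, i.e.\ $A\bigl(1{-}\tilde{\omega}(1{+}\gamma\beta){-}\tilde{\omega}\gamma\beta^{2}C/\delta\bigr)\geq\tilde{\omega}\eta\sqrt{n}G(\beta C/\delta{+}1)$. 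The choice $\gamma{=}\omega\delta/(12\beta(\beta{+}1)(C{+}1))$ forces $\gamma\beta\leq\omega/12$ and $\gamma\beta^{2}C/\delta\leq\omega/12$, while $1{-}\tilde{\omega}\geq\omega/2$, so the left coefficient is at least $\omega/3$ and the stated $A$ (hence $B{=}\mcO(C\beta(\beta{+}1)(C{+}1)\eta\sqrt{n}G/(\omega\delta^{2}))$) does close the induction. I expect this verification---the analogue of the $\epsilon,\tilde{\epsilon}$ bookkeeping of~\eqref{eq:cons-constant-2}--\eqref{eq:cons-constant-5}---to be the main obstacle, since one must guarantee that the $\Theta(\omega)$ contraction margin in the $\hat{\mcL}$ recursion survives the gradient perturbation uniformly over all $\omega\in(0,1]$.

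Finally I would pass from $\mU$ to $\mZ$. Since $\vy(t{+}1){=}\mW^{t{+}1}\vect{1}$ we have $y_i(t{+}1)\geq\kappa$, and since $\vect{1}^\top\mU(t{+}1){=}\vect{1}^\top\mX(t)$ we have $\overline{\vx}(t){=}(\mU(t{+}1)^\top\vect{1})/n$; hence $\vz_i(t{+}1){-}\overline{\vx}(t)$ equals $y_i(t{+}1)^{-1}$ times $[(\mI{-}\vphi\vect{1}^\top)\mU(t{+}1)]_i{+}(n[\vphi]_i{-}y_i(t{+}1))\overline{\vx}(t)$, exactly as in the derivation of~\eqref{eq:cons-Z-inequality}. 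Squaring, summing over $i$, and using $\sum_i(n[\vphi]_i{-}y_i(t{+}1))^2{=}\lVert(\vphi\vect{1}^\top{-}\mW^{t{+}1})\vect{1}\rVert^2\leq C^2 n(1{-}\delta)^{2(t{+}1)}$ together with $\lVert\overline{\vx}(t)\rVert_{L^2}\leq\eta t G$ and $\sup_{t\geq0}t^{2}(1{-}\delta)^{2(t{+}1)}\leq\delta^{-2}$, I get $\Psi_x(t)\leq\tfrac{2}{\kappa^{2}}\bigl(B^{2}{+}C^{2}nG^{2}\eta^{2}\delta^{-2}\bigr)$. The second summand is of lower order (no factor $\omega^{-2}$, only $\delta^{-2}$ instead of $\delta^{-4}$) and is absorbed into the first, so $\Psi_x(t)=\mcO\bigl(C^{2}(C{+}1)^{2}\beta^{2}(\beta{+}1)^{2}nG^{2}\eta^{2}/(\omega^{2}\delta^{4}\kappa^{2})\bigr)$, which is the claimed estimate up to the universal constant.
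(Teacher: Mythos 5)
Your proposal is correct and follows the same overall strategy as the paper's proof---unroll the update through the lazy matrix $\mB$ of Proposition~\ref{prop:lazy-mixing}, couple a consensus-error recursion with a compression-error recursion, close the system by induction under the stated $\gamma$, and finish with the push-sum normalization $y_i(t{+}1)=[\mW^{t{+}1}\vect{1}]_i\geq\kappa$. The genuine difference is the norm you work in: the paper runs the entire argument on second moments $\mcK(t)=\bbE\lVert\mX(t){-}\vphi\vect{1}^\top\mX(t)\rVert_F^2$ and $\hat{\mcK}(t)=\bbE\lVert\hat\mX(t{+}1){-}\mX(t)\rVert_F^2$, which forces the Young splitting $(1{+}\alpha),(1{+}\alpha^{-1})$ in~\eqref{eq:opt-proof-6} (degrading the contraction to $1{-}\omega/2$ and introducing the $4/\omega$ amplification of the cross terms) and the AM--GM double-sum device in~\eqref{eq:opt-proof-4}--\eqref{eq:opt-proof-5}; you instead work with $(\bbE\lVert\cdot\rVert_F^2)^{1/2}$, where Minkowski is a true triangle inequality, the conditional application of~\eqref{eq:q-comp} gives the clean factor $\sqrt{1{-}\omega}\leq 1{-}\omega/2$, and the geometric sums close directly, so your inductive verification (left margin at least $\omega/3$ after $\gamma\beta\leq\omega/12$ and $\gamma\beta^2C/\delta\leq\omega/12$) is noticeably lighter than the paper's~\eqref{eq:opt-constant-1}--\eqref{eq:opt-constant-6}; you also carry a uniform bound on both $\mcL$ and $\hat{\mcL}$ jointly rather than inducting only on $\hat{\mcK}$. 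Two small caveats, neither fatal: you only recover the bound up to a universal constant (your constants actually come in below $1632$, so this is harmless), and the final absorption of the $C^2nG^2\eta^2/(\delta^2\kappa^2)$ term into the leading one implicitly needs $\beta(\beta{+}1)(C{+}1)\gtrsim\omega\delta$---but the paper's own last step in~\eqref{eq:bounded-U-average} performs the identical absorption, so you are at parity on rigor.
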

	
	\begin{proof}[Proof of Lemma~\ref{lem:diff-opt}]
		Based on the update rule in~\eqref{eq:update-opt}, we define two error functions \mbox{$\mcK(t)=\bbE\lVert\mX(t){-}\vphi\vect{1}^\top\mX(t)\rVert_F^2$} and \mbox{$\hat{\mcK}(t)=\bbE\lVert\hat\mX(t{+}1){-}\mX(t)\rVert_F^2$}.
		Note that the definition of $\mcK(t)$ differs from its counterpart in the proof of Theorem~\ref{thm:consensus}. By rewriting the recursion in~\eqref{eq:update-opt}, similar to~\eqref{eq:cons-proof-1}, we have
		\begin{align}\label{eq:opt-proof-1}
			\mX(t{+}1) &= \mB^{t{+}1} \mX(0)\nonumber\\
			& + \gamma\,\sum\limits_{s=0}^{t} \mB^s\left(\mW{-}\mI\right)\left[\hat\mX(t{-}s{+}1) - \mX(t{-}s)\right]\nonumber\\
			& - \eta\,\sum\limits_{s=0}^{t} \mB^s \,\partial\tilde{F}(\mZ{(t{-}s{+}1)},\vxi_{t{-}s{+}1}),
		\end{align}
		therefore, we have
		\begin{align}\label{eq:opt-proof-2}
			\mX(&t{+}1) - \vphi\vect{1}^\top \mX(t{+}1) \nonumber\\
			\overset{\eqref{eq:q-comp},\eqref{eq:opt-proof-1}}&{=} \left(\mB^{t{+}1}-\vphi\vect{1}^\top\right) \mX(0)\nonumber\\
			& + \gamma\,\sum\limits_{s=0}^{t} \left(\mB^s-\vphi\vect{1}^\top\right)\left(\mW{-}\mI\right)\left[\hat\mX(t{-}s{+}1) - \mX(t{-}s)\right]\nonumber\\
			& - \eta\,\sum\limits_{s=0}^{t} \left(\mB^s-\vphi\vect{1}^\top\right) \,\partial\tilde{F}(\mZ{(t{-}s{+}1)},\vxi_{t{-}s{+}1}).
		\end{align}
		Before proceeding with the proof, let us state some inequalities. For any set of $m$ matrices $\{\mA_i\}_{i=1}^{m}$ such that \mbox{$\mA_i\in\bbR^{n{\times}d}$}, matrix \mbox{$\mB\in\bbR^{n{\times}n}$}, and constant \mbox{$\alpha>0$}, the following properties hold: for all \mbox{$i,j\in[m]$:}
		\begingroup
		\allowdisplaybreaks
		\begin{subequations}\label{eq:frobenius}
			\begin{align}
				\lVert \mA_i + \mA_j \rVert_F^2 &\leq (1{+}\alpha)\lVert\mA_i\rVert_F^2 + (1{+}\alpha^{-1})\lVert\mA_j\rVert_F^2,\label{eq:frobenius-1}
				\\\lVert \mA_i + \mA_j \rVert_F &\leq \lVert\mA_i\rVert_F + \lVert\mA_j\rVert_F,\label{eq:frobenius-2}
				\\\lVert \mB\mA_i \rVert_F &\leq \lVert\mB\rVert\lVert\mA_i\rVert_F,\label{eq:frobenius-3}
				\\2\lVert \mA_i\rVert_F\lVert\mA_j\rVert_F &\leq \lVert\mA_i\rVert_F^2 + \lVert\mA_j\rVert_F^2,\label{eq:frobenius-4}
				\\(1-\alpha)&\left(1+\frac{\alpha}{2}\right)\leq 1-\frac{\alpha}{2}\label{eq:frobenius-5},
				\\(1-\alpha)&\left(1+\frac{2}{\alpha}\right)\leq \frac{2}{\alpha}\label{eq:frobenius-6},
				\\\left\lVert\sum\limits_{i=1}^m \mA_i\right\rVert^2 &\leq m \left(\sum\limits_{i=1}^m \lVert\mA_i\rVert^2\right).\label{eq:frobenius-7}
			\end{align}
		\end{subequations}
		\endgroup
		Moreover, for any set of $m$ vectors \mbox{$\{\va_i\}_{i{=}1}^m$}, where \mbox{$\va_i\in \bbR^d$}, the following property holds:
		\begin{align}\label{eq:norm-2-n}
			\left\lVert\sum\limits_{i=1}^m \va_i\right\rVert^2 &\leq m \sum\limits_{i=1}^m \left(\lVert\va_i\rVert^2\right).
		\end{align}
		Now, according to the stated inequalities and~\eqref{eq:opt-proof-1}, we have
		\begin{align}\label{eq:opt-proof-3}
			&\mcK(t{+}1)=\big\lVert\mX(t{+}1) - \vphi\vect{1}^\top \mX(t{+}1)\big\rVert_F^2\nonumber\\
			\overset{\eqref{eq:frobenius-7}}&{\leq}{ 2\gamma^2\Bigg\lVert\sum\limits_{s=0}^{t} \medmath{\left(\mB^s{-}\vphi\vect{1}^\top\right)\left(\mW{-}\mI\right)\left[\hat\mX(t{-}s{+}1) {-} \mX(t{-}s)\right]}\Bigg\rVert_F^2}\nonumber\\
			& + 2\eta^2\Bigg\lVert\sum\limits_{s=0}^{t} \medmath{\left(\mB^s {-} \vphi\vect{1}^\top\right) \,\partial\tilde{F}\left(\mZ{(t{-}s{+}1)},\vxi_{t{-}s{+}1}\right)}\Bigg\rVert_F^2\nonumber\\
			\overset{\eqref{eq:frobenius-2},\eqref{eq:frobenius-3}}&{\leq} 2\gamma^2\beta^2 C^2\left[\sum\limits_{s=0}^{t} \medmath{(1{-}\gamma\delta)^s \left\lVert\hat\mX(t{-}s{+}1) {-} \mX(t{-}s)\right\rVert_F}\right]^2\nonumber\\
			& + 2\eta^2 C^2\left[\sum\limits_{s=0}^{t} \medmath{(1{-}\gamma\delta)^s \left\lVert\partial\tilde{F}\left(\mZ{(t{-}s{+}1)},\vxi_{t{-}s{+}1}\right)\right\rVert_F}\right]^2,
		\end{align}
		where according to AM-GM inequality we also have the following two inequalities:
		\begin{align}\label{eq:opt-proof-4}
			&\left[\sum\limits_{s=0}^{t} \medmath{(1{-}\gamma\delta)^s \left\lVert\hat\mX(t{-}s{+}1) {-} \mX(t{-}s)\right\rVert_F}\right]^2 \nonumber\\
			& = \sum\limits_{s=0}^{t}\sum\limits_{\hat{s}=0}^{t} \medmath{(1{-}\gamma\delta)^{s+\hat{s}} \left\lVert\hat\mX(t{-}s{+}1) {-} \mX(t{-}s)\right\rVert_F}\nonumber\\
			&\qquad\qquad\qquad\qquad\qquad\medmath{\times\left\lVert\hat\mX(t{-}\hat{s}{+}1) {-} \mX(t{-}\hat{s})\right\rVert_F}\nonumber\\
			\overset{\eqref{eq:frobenius-4}}&{\leq}\frac{1}{2}\sum\limits_{s=0}^{t}\sum\limits_{\hat{s}=0}^{t} \medmath{(1{-}\gamma\delta)^{s+\hat{s}} \Bigg[\left\lVert\hat\mX(t{-}s{+}1) {-} \mX(t{-}s)\right\rVert_F^2}\nonumber\\
			&\qquad\qquad\qquad\qquad\qquad\medmath{+\left\lVert\hat\mX(t{-}\hat{s}{+}1) {-} \mX(t{-}\hat{s})\right\rVert_F^2\Bigg]}\nonumber\\
			\overset{}&{=}\sum\limits_{s=0}^{t}\medmath{(1{-}\gamma\delta)^{s} \left\lVert\hat\mX(t{-}s{+}1) {-} \mX(t{-}s)\right\rVert_F^2 \sum\limits_{\hat{s}=0}^{t} (1{-}\gamma\delta)^{\hat{s}}}\nonumber\\
			\overset{}&{\leq}\frac{1}{\gamma\delta}\sum\limits_{s=0}^{t}\medmath{(1{-}\gamma\delta)^{s} \left\lVert\hat\mX(t{-}s{+}1) {-} \mX(t{-}s)\right\rVert_F^2},
		\end{align}
		as well as
		\begin{align}\label{eq:opt-proof-5}
			&\left[\sum\limits_{s=0}^{t} \medmath{(1{-}\gamma\delta)^s \left\lVert\partial\tilde{F}\left(\mZ{(t{-}s{+}1)},\vxi_{t{-}s{+}1}\right)\right\rVert_F}\right]^2\nonumber\\
			& = \sum\limits_{s=0}^{t}\sum\limits_{\hat{s}=0}^{t} \medmath{(1{-}\gamma\delta)^{s+\hat{s}} \left\lVert\partial\tilde{F}\left(\mZ{(t{-}s{+}1)},\vxi_{t{-}s{+}1}\right)\right\rVert_F}\nonumber\\
			&\qquad\qquad\qquad\qquad\qquad\medmath{\times\left\lVert\partial\tilde{F}\left(\mZ{(t{-}\hat{s}{+}1)},\vxi_{t{-}\hat{s}{+}1}\right)\right\rVert_F}\nonumber\\
			\overset{\text{\footnotesize\eqref{eq:frobenius-4}}}&{\leq}\frac{1}{2}\sum\limits_{s=0}^{t}\sum\limits_{\hat{s}=0}^{t} \medmath{(1{-}\gamma\delta)^{s+\hat{s}} \Bigg[\left\lVert\partial\tilde{F}\left(\mZ{(t{-}s{+}1)},\vxi_{t{-}s{+}1}\right)\right\rVert_F^2}\nonumber\\
			&\qquad\qquad\qquad\qquad\qquad\medmath{+\left\lVert\partial\tilde{F}\left(\mZ{(t{-}\hat{s}{+}1)},\vxi_{t{-}\hat{s}{+}1}\right)\right\rVert_F^2\Bigg]}\nonumber\\
			\overset{}&{=}\sum\limits_{s=0}^{t}\medmath{(1{-}\gamma\delta)^{s} \left\lVert\partial\tilde{F}\left(\mZ{(t{-}s{+}1)},\vxi_{t{-}s{+}1}\right)\right\rVert_F^2 \sum\limits_{\hat{s}=0}^{t} (1{-}\gamma\delta)^{\hat{s}}}\nonumber\\
			\overset{}&{\leq}\frac{1}{\gamma\delta}\sum\limits_{s=0}^{t}\medmath{(1{-}\gamma\delta)^{s} \left\lVert\partial\tilde{F}\left(\mZ{(t{-}s{+}1)},\vxi_{t{-}s{+}1}\right)\right\rVert_F^2}\nonumber\\
			\overset{\text{\footnotesize Assump.~\ref{assump:bounded-gradient}}}&{\leq} \frac{nG^2}{\gamma^2\delta^2}.
		\end{align}
		Furthermore, according to~\eqref{eq:q-comp}, and similar to~\eqref{eq:cons-proof-4}, we have
		\begin{align}\label{eq:opt-proof-6}
			&\bbE\Big\lVert\mX(t{+}1) {-} \hat{\mX}(t{+}2)\Big\rVert_F^2\nonumber\\
			&\leq (1{-}\omega)\Big\lVert \left[\left(1{+}\gamma\right)\mI {-} \gamma\mW\right]\left[\mX(t){-}\hat{\mX}(t{+}1)\right]\nonumber\\
			&\qquad\qquad\quad+\gamma\,(\mW-\mI)\left[\mX(t){-}\vphi\vect{1}^\top\mX(t)\right]\nonumber\\
			&\qquad\qquad\quad- \eta\,\partial\tilde{F}(\mZ{(t{+}1)},\vxi_{t{+}1})\Big\rVert_F^2\nonumber\\
			\overset{\eqref{eq:frobenius-1}}&{\leq} (1{-}\omega)\left(1{+}\frac{\omega}{2}\right)\Big\lVert\left[\left(1{+}\gamma\right)\mI {-} \gamma\mW\right]\left[\mX(t){-}\hat{\mX}(t{+}1)\right]\Big\rVert_F^2\nonumber\\
			&+(1{-}\omega)\left(1{+}\frac{2}{\omega}\right)\Big\lVert\gamma\,(\mW-\mI)\left[\mX(t){-}\vphi\vect{1}^\top\mX(t)\right]\nonumber\\
			&\qquad\qquad\qquad\quad- \eta\,\partial\tilde{F}(\mZ{(t{+}1)},\vxi_{t{+}1})\Big\rVert_F^2\nonumber\\
			\overset{\eqref{eq:frobenius-5},\eqref{eq:frobenius-6}}&{\leq} \left(1{-}\frac{\omega}{2}\right)\Big\lVert\left[\left(1{+}\gamma\right)\mI {-} \gamma\mW\right]\left[\mX(t){-}\hat{\mX}(t{+}1)\right]\Big\rVert_F^2\nonumber\\
			&+\frac{4}{\omega}\Big\lVert\gamma\,(\mW-\mI)\left[\mX(t){-}\vphi\vect{1}^\top\mX(t)\right]\Big\rVert_F^2\nonumber\\
			&+\frac{4}{\omega}\Big\lVert\eta\,\partial\tilde{F}(\mZ{(t{+}1)},\vxi_{t{+}1})\Big\rVert_F^2\nonumber\\
			\overset{\eqref{eq:frobenius-3}}&{\leq} \left(1{-}\frac{\omega}{2}\right)(1{+}\gamma\beta)^2\Big\lVert\mX(t){-}\hat{\mX}(t{+}1)\Big\rVert_F^2\nonumber\\
			&+\frac{4}{\omega}\gamma^2\beta^2\Big\lVert\mX(t){-}\vphi\vect{1}^\top\mX(t)\Big\rVert_F^2\nonumber\\
			&+\frac{4n\eta^2G^2}{\omega}.
		\end{align}
		Therefore, due to~\eqref{eq:opt-proof-3},~\eqref{eq:opt-proof-4},~\eqref{eq:opt-proof-5}, and~\eqref{eq:opt-proof-6}, the following two inequalities hold given parameter $\gamma$ introduced in Lemma~\ref{lem:diff-opt}:
		\begingroup
		\allowdisplaybreaks
		\begin{subequations}\label{eq:opt-inequalities}
			\begin{align}
				\mcK(t{+}1) &\leq \frac{2\gamma\beta^2 C^2}{\delta}\sum\limits_{s{=}0}^{t} (1{-}\gamma\delta)^s \hat{\mcK}(t{-}s) + \frac{2n\eta^2C^2G^2}{\gamma^2\delta^2},\label{eq:opt-inequalities-a}\\
				\vspace{0.5em}
				\hat{\mcK}(t{+}1) &\leq \frac{4\gamma^2\beta^2}{\omega}\mcK(t) + \left(1{-}\frac{\omega}{2}\right)(1{+}\gamma\beta)^2 \hat{\mcK}(t) +\frac{4n\eta^2G^2}{\omega}.\label{eq:opt-inequalities-b}
			\end{align}
		\end{subequations}
		\endgroup
		Then, according to the inequalities in~\eqref{eq:opt-inequalities}, and by applying induction, similar to the proof of Theorem~\ref{thm:consensus}, it is sufficient to show that for any compression ratio \mbox{$\omega\in(0,1]$}, under the choice of $\gamma$ in Lemma~\ref{lem:diff-opt}, the following inequality holds:
		\begin{align}\label{eq:opt-K-hat-inequality}
			\hat{\mcK}(t) \leq \frac{128nG^2\max\{2\beta^2C^2,\delta^2\}}{\omega^2\delta^2} \eta^2 \coloneqq b_1\eta^2.
		\end{align}
		First, note that the base of induction holds. Moreover, assume that~\eqref{eq:opt-K-hat-inequality} holds for $t=0,1,\dots,T$. Due to~\eqref{eq:opt-inequalities}, we have the following
		\begin{align}\label{eq:opt-constant-1}
			\hat{\mcK}(t{+}1) &\leq \left(1{-}\frac{\omega}{2}\right)(1{+}\gamma\beta)^2 \hat{\mcK}(t) +\frac{4n\eta^2G^2}{\omega}\nonumber\\
			& +\frac{4\gamma^2\beta^2}{\omega}\Bigg[\frac{2\gamma\beta^2 C^2}{\delta}\sum\limits_{s{=}0}^{t{-}1} (1{-}\gamma\delta)^s \hat{\mcK}(t{-}s{-}1)\nonumber\\
			& \qquad\qquad+\frac{2n\eta^2C^2G^2}{\gamma^2\delta^2}\Bigg]\nonumber\\
			&\leq \left(1{-}\frac{\omega}{2}
			\right)(1{+}\gamma\beta)^2 \hat{\mcK}(t)\nonumber\\
			& +\frac{8\gamma^3\beta^4C^2}{\omega\delta}\sum\limits_{s{=}0}^{t{-}1} (1{-}\gamma\delta)^s \hat{\mcK}(t{-}s{-}1)\nonumber\\
			& +\frac{8n\beta^2C^2G^2 + 4n\delta^2G^2}{\omega\delta^2}\eta^2\nonumber\\
			\overset{\eqref{eq:opt-K-hat-inequality}}&{\leq} \left(1{-}\frac{\omega}{2}
			\right)(1{+}\gamma\beta)^2 b_1\eta^2\nonumber\\
			& +\frac{8\gamma^3\beta^4C^2}{\omega\delta}b_1\eta^2\sum\limits_{s{=}0}^{t{-}1} (1{-}\gamma\delta)^s\nonumber\\
			& +\frac{8nG^2\max\left\{2\beta^2C^2, \delta^2\right\}}{\omega\delta^2}\eta^2\nonumber\\
			&\leq \left(1{-}\frac{\omega}{2}
			\right)(1{+}\gamma\beta)^2 b_1\eta^2 + \frac{8\beta^4C^2}{\omega\delta^2}b_1\gamma^2\eta^2\nonumber\\
			& +\frac{8nG^2\max\left\{2\beta^2C^2, \delta^2\right\}}{\omega\delta^2}\eta^2,
		\end{align}
		therefore, it is sufficient to show that:
		\begin{align}\label{eq:opt-constant-2}
			\left(1{-}\frac{\omega}{2}
			\right)(1{+}\gamma\beta)^2 b_1 & + \frac{8\beta^4C^2}{\omega\delta^2}\gamma^2 b_1\nonumber\\
			& + \frac{8nG^2\max\left\{2\beta^2C^2, \delta^2\right\}}{\omega\delta^2} \leq b_1.
		\end{align}
		First of all, note that
		\begin{align}\label{eq:opt-constant-3}
			\frac{8nG^2\max\left\{2\beta^2C^2, \delta^2\right\}}{\omega\delta^2} = \frac{\omega b_1}{16}.
		\end{align}
		Moreover, under the choice of $\gamma$ in Lemma~\ref{lem:diff-opt}
		\begin{align}\label{eq:opt-constant-4}
			\gamma = \frac{\omega\delta}{12\beta(\beta+1)(C+1)} \leq \min\left\{\frac{\omega}{8\beta},\frac{\omega\delta}{12\beta^2C}\right\},
		\end{align}
		thus
		\begin{align}\label{eq:opt-constant-5}
			\gamma^2 \leq \frac{\omega^2\delta^2}{144\beta^4C^2} < \frac{\omega^2\delta^2}{128\beta^4C^2} \Rightarrow
			\frac{8\beta^4C^2}{\omega\delta^2}\gamma^2 < \frac{\omega}{16},
		\end{align}
		and
		\begin{align}\label{eq:opt-constant-6}
			\gamma \leq \frac{\omega}{8\beta} \Rightarrow 1+\gamma\beta \leq 1+\frac{\omega}{8} < 1+\frac{\omega}{4} \overset{\eqref{eq:frobenius-5}}&{\Rightarrow} \nonumber\\
			\left(1-\frac{\omega}{2}\right)(1+\gamma\beta)^2 \overset{\eqref{eq:frobenius-5}}{<} 1-\frac{\omega}{8} .&
		\end{align}
		Then, according to~\eqref{eq:opt-constant-4},~\eqref{eq:opt-constant-5}, and~\eqref{eq:opt-constant-6}, the inequality in~\eqref{eq:opt-constant-2} holds.
		\begin{align}\label{eq:opt-mat-ineq}
			\bbE\lVert\mZ(t{+}1){-}\overline{\mX}(t)\rVert_F^2 & = \sum_{i=1}^n \bbE\left\lVert \vz_i(t{+}1) - \overline{\vx}(t) \right\rVert^2\nonumber\\
			& = \sum_{i=1}^n \bbE\left\lVert \frac{\vu_i(t{+}1) {-}  y_i(t{+}1)\overline{\vx}(t)}{y_i(t{+}1)} \right\rVert^2\nonumber\\
			\overset{\eqref{eq:mixing-matrix}}&{\leq}\sum\limits_{i=1}^n \frac{\bbE\left\lVert \vu_i(t{+}1) {-} [\mW^{t{+}1}\vect{1}]_i\overline{\vx}(t) \right\rVert^2}{\kappa^2}\nonumber\\
			& = \frac{1}{\kappa^2} \bbE\Big\lVert\mU(t{+}1) {-} \mW^{t{+}1} \frac{\vect{1}\vect{1}^\top}{n}\mX(t)\Big\rVert_F^2\nonumber\\
			= & \frac{1}{\kappa^2} \bbE\Big\lVert\mU(t{+}1) {-} \mW^{t{+}1} \frac{\vect{1}\vect{1}^\top}{n}\mU(t{+}1)\Big\rVert_F^2,
		\end{align}
		where we use the fact that \mbox{$\frac{\vect{1}\vect{1}^\top}{n}\mU(t{+}1)=\frac{\vect{1}\vect{1}^\top}{n}\mX(t)$}.
		%
		%
		Again, using the update rule in~\eqref{eq:update-opt}, inequality~\eqref{eq:opt-K-hat-inequality}, and Cauchy-Schwarz, we have the following:
		\begin{align}\label{eq:bounded-U-average}
			&\bbE\Big\lVert\mU(t{+}1) {-} \mW^{t{+}1} \frac{\vect{1}\vect{1}^\top}{n}\mU(t{+}1)\Big\rVert_F^2\nonumber\\
			\overset{\eqref{eq:frobenius-7}}&{\leq} 3\gamma^2 \left\lVert\sum_{s=0}^t (\mB^s-\vphi \vect{1}^T)(\mW{-}\mI)\left[\hat{\mX}(t{-}s{+}1) {-} \mX(t{-}s)\right]\right\rVert_F^2\nonumber\\
			&+ 3\eta^2 \left\lVert\sum_{s=0}^t (\mB^s{-}\vphi \vect{1}^T)\,\partial\tilde{F}\big(\mZ{(t{-}s{+}1\big)},\vxi_{t{-}s{+}1})\right\rVert_F^2\nonumber\\
			&+ 3\eta^2 \left\lVert\sum_{s=0}^t (\mW^{t{+}1}{-}\vphi \vect{1}^T)\,\partial\tilde{F}\big(\mZ{(t{-}s{+}1)},\vxi_{t{-}s{+}1}\big)\right\rVert_F^2\nonumber\\
			\overset{\eqref{eq:lazy-matrix}}&{\leq} 3C^2\gamma^2\beta^2 \left(\sum_{s=0}^t(1{-}\gamma\delta)^s\left\lVert\hat{\mX}(t{-}s{+}1) {-} \mX(t{-}s)\right\rVert_F\right)^2\nonumber\\
			& + 3C^2\eta^2 \left(\sum_{s=0}^t(1{-}\gamma\delta)^s\left\lVert\partial\tilde{F}\big(\mZ{(t{-}s{+}1\big)},\vxi_{t{-}s{+}1}))\right\rVert_F\right)^2\nonumber\\
			& + \frac{3C^2nG^2}{\delta}\eta^2\nonumber\\
			\overset{\text{\tiny similar to }\eqref{eq:opt-proof-5}}&{\leq} \frac{3\,C^2 \beta^2 b_1}{\delta^2}\eta^2 + \frac{3\,C^2nG^2}{\gamma^2\delta^2}\eta^2 + \frac{3C^2nG^2}{\delta}\eta^2\nonumber\\
			\overset{\eqref{eq:opt-constant-4}}&{\leq} \frac{1632\,C^2 (C{+}1)^2\beta^2(\beta{+}1)^2n G^2}{\omega^2\delta^4}\eta^2,
		\end{align}
		which concludes the proof.
	\end{proof}
	
	Lemma~\ref{lem:diff-opt} indicates that the agents can control their agreement (consensus) \mbox{$\Psi_{x}(t)$} with a proper choice of optimization stepsize $\eta$ while trying to find a (sub)optimal solution for the optimization problem. In other words, the upper bound on \mbox{$\Psi_{x}(t)$} shows the level of coordination between the agents in the corresponding decentralized setup.
	
	We can use the result in Lemma~\ref{lem:diff-opt} to show the convergence of the following theorems. We first state our convergence result for decentralized smooth and strongly convex stochastic optimization over a directed network with arbitrary compressed communication.

	\begin{theorem}[Smooth and Strongly Convex Stochastic Optimization]\label{thm:strongly-convex}
		Let the compression operator $Q$ satisfy~\eqref{eq:q-comp}, \mbox{$\mX(0)=\hat\mX(0)\coloneqq\vect{0}$}, and \mbox{$\vy{(0)}= \vect{1}$}. Then, under Assumptions~\ref{assump:bounded-variance}-\ref{assump:strong-convex}, the iterates of
		update rule~\eqref{eq:update-opt}
		have the following property: for any \mbox{$\omega\in(0,1]$}, and \mbox{$T\geq 64L^2/\mu^2$},
		\begin{align*}
			\bbE f\Bigg(\frac{1}{nS(T)}\sum_{t=0}^{T-1}p^t\sum_{i=1}^{n}\vx_i(T{-}t{-}1)\Bigg) &- f(\vx^\star) \\
			\qquad \leq \frac{C_1\log T}{nT} + \frac{C_2}{T} + \frac{C_3 (\log T)^2}{T^2},
		\end{align*}
		where \mbox{$C_3\coloneqq \frac{13056\,C^2(C{+}1)^2\beta^2(\beta{+}1)^2G^2 L(L{+}1)}{\mu^2\omega^2\delta^4\kappa^2}$}, \mbox{$C_1\coloneqq \frac{2\sigma^2}{\mu}$}, and \mbox{$C_2\coloneqq \frac{\mu\lVert\vx^\star\rVert^2}{4}$}, when \mbox{$p\coloneqq 1{-}\frac{\log T}{T}$}, \mbox{\small $S(T)=\sum\limits_{t=0}^{T-1} p^t$}, \mbox{$\eta\coloneqq \frac{2\log T}{\mu T}$}, and \mbox{$\gamma$} as in Lemma~\ref{lem:diff-opt}.
	\end{theorem}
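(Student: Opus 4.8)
\noindent\textbf{Proof proposal for Theorem~\ref{thm:strongly-convex}.}
The plan is to reduce the analysis to a perturbed SGD recursion for the column average $\overline{\vx}(t)=\tfrac{1}{n}\sum_{i=1}^{n}\vx_i(t)$ and then run a geometrically weighted telescoping argument. First I would use column stochasticity of $\mW$: left-multiplying~\eqref{eq:update-opt} by $\tfrac{1}{n}\vect{1}^\top$ annihilates $\gamma(\mW{-}\mI)\hat{\mX}(t{+}1)$ since $\vect{1}^\top(\mW{-}\mI)=\vect{0}$, which gives
\begin{align*}
\overline{\vx}(t{+}1)=\overline{\vx}(t)-\frac{\eta}{n}\sum_{i=1}^{n}\nabla\tilde{f}_i\big(\vz_i(t{+}1),\xi_{i,t{+}1}\big),\qquad\overline{\vx}(0)=\vect{0}.
\end{align*}
Letting $\mathcal{F}_t$ be the $\sigma$-algebra generated by all compression and sampling randomness through round $t$, the matrix $\mZ(t{+}1)$ is $\mathcal{F}_t$-measurable, so the step has conditional mean $\tfrac{1}{n}\sum_i\nabla f_i(\vz_i(t{+}1))$ and, by Assumption~\ref{assump:bounded-variance}, conditional variance at most $\sigma^2/n$.

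Second, I would derive a one-step contraction for $a_t\coloneqq\bbE\lVert\overline{\vx}(t){-}\vx^\star\rVert^2$. Expanding $\lVert\overline{\vx}(t{+}1){-}\vx^\star\rVert^2$, taking conditional expectation, and writing each inner product as $\langle\nabla f_i(\vz_i(t{+}1)),\vz_i(t{+}1){-}\vx^\star\rangle+\langle\nabla f_i(\vz_i(t{+}1)),\overline{\vx}(t){-}\vz_i(t{+}1)\rangle$, I would bound the first term by $\mu$-strong convexity (Assumption~\ref{assump:strong-convex}) and the second by convexity (Assumption~\ref{assump:convex}), obtaining $\tfrac{1}{n}\sum_i\langle\nabla f_i(\vz_i(t{+}1)),\overline{\vx}(t){-}\vx^\star\rangle\ge f(\overline{\vx}(t)){-}f(\vx^\star)+\tfrac{\mu}{2n}\sum_i\lVert\vz_i(t{+}1){-}\vx^\star\rVert^2$; then the crude bound $\lVert\overline{\vx}(t){-}\vx^\star\rVert^2\le2\lVert\vz_i{-}\vx^\star\rVert^2+2\lVert\vz_i{-}\overline{\vx}(t)\rVert^2$ converts $\tfrac{1}{n}\sum_i\lVert\vz_i{-}\vx^\star\rVert^2$ into $\tfrac{1}{2} a_t$ minus $\tfrac{1}{n}\lVert\mZ(t{+}1){-}\overline{\mX}(t)\rVert_F^2$ (this is the source of the factor $\tfrac{1}{2}$ in $p=1{-}\tfrac{\mu\eta}{2}$). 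For the $\eta^2$ term, $L$-smoothness (Assumption~\ref{assump:l-smooth}) and optimality of $\vx^\star$ give $\lVert\nabla f(\overline{\vx}(t))\rVert^2\le2L(f(\overline{\vx}(t)){-}f(\vx^\star))$, and smoothness again bounds the mismatch between $\tfrac{1}{n}\sum_i\nabla f_i(\vz_i(t{+}1))$ and $\nabla f(\overline{\vx}(t))$ by $\tfrac{L^2}{n}\lVert\mZ(t{+}1){-}\overline{\mX}(t)\rVert_F^2$. Recognizing $\bbE\lVert\mZ(t{+}1){-}\overline{\mX}(t)\rVert_F^2=\Psi_x(t)$ and assuming $\eta\le\tfrac{1}{4L}$, this yields
\begin{align*}
a_{t{+}1}\le\Big(1{-}\tfrac{\mu\eta}{2}\Big)a_t-\eta\big(\bbE f(\overline{\vx}(t)){-}f(\vx^\star)\big)+\frac{\mu\eta{+}2L^2\eta^2}{n}\Psi_x(t)+\frac{\eta^2\sigma^2}{n};
\end{align*}
the hypothesis $T\ge64L^2/\mu^2$ is exactly what makes $\eta=\tfrac{2\log T}{\mu T}$ satisfy $\eta\le\tfrac{1}{4L}$, so that the $4L\eta^2(f{-}f^\star)$ produced by the variance bound consumes only half of the $-2\eta(f{-}f^\star)$ progress term.

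Third, I would insert Lemma~\ref{lem:diff-opt} (which applies because $\gamma$ is chosen as there and Assumption~\ref{assump:bounded-gradient} holds) to replace $\Psi_x(t)$ by the constant $\Psi_x^{\max}\propto nG^2\eta^2/(\omega^2\delta^4\kappa^2)$, multiply the $t$-th inequality by $p^{T-1-t}$ with $p=1{-}\tfrac{\log T}{T}$, and sum over $t=0,\dots,T{-}1$. The $a_t$ terms telescope to $p^T a_0=p^T\lVert\vx^\star\rVert^2$ (since $\overline{\vx}(0)=\vect{0}$), while the constant residual picks up a factor $S(T)=\sum_{t=0}^{T-1}p^t=\tfrac{1-p^T}{1-p}$. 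Dividing by $\eta S(T)$, noting that $\tfrac{1}{nS(T)}\sum_t p^t\sum_i\vx_i(T{-}t{-}1)=\tfrac{1}{S(T)}\sum_{r=0}^{T-1}p^{T-1-r}\overline{\vx}(r)$ is a convex combination of the $\overline{\vx}(r)$, applying Jensen's inequality to $f$, and substituting $\eta=\tfrac{2\log T}{\mu T}$, $p^T\le e^{-\log T}=\tfrac{1}{T}$, and $\eta S(T)=\tfrac{2(1-p^T)}{\mu}$, the right-hand side collapses into the three announced terms: $\tfrac{\eta\sigma^2}{n}=\tfrac{C_1\log T}{nT}$ with $C_1=\tfrac{2\sigma^2}{\mu}$; the initial-condition term $\tfrac{p^T\lVert\vx^\star\rVert^2}{\eta S(T)}\le\tfrac{C_2}{T}$ with $C_2=\tfrac{\mu\lVert\vx^\star\rVert^2}{4}$; and the consensus term $\tfrac{(\mu+2L^2\eta)\Psi_x^{\max}}{n}=\mcO\big(\tfrac{(\log T)^2}{T^2}\big)$, which equals $\tfrac{C_3(\log T)^2}{T^2}$ after using $\mu\le L\le L(L{+}1)$ to absorb $\mu$ into $L(L{+}1)/\mu^2$.

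The main obstacle is the one-step inequality of the second paragraph: because the stochastic gradients are evaluated at the heterogeneous rescaled iterates $\vz_i(t{+}1)=\vu_i(t{+}1)/y_i(t{+}1)$ rather than at the common average $\overline{\vx}(t)$ — and, crucially, $\tfrac{1}{n}\sum_i\vz_i(t{+}1)\ne\overline{\vx}(t)$ because the slack scalars $y_i(t{+}1)$ differ across agents — every use of convexity, strong convexity, and smoothness must be routed through $\vz_i(t{+}1)$ and then transferred back to $\overline{\vx}(t)$ via Young-type inequalities, all while keeping each residual term in exactly the Frobenius form $\bbE\lVert\mZ(t{+}1){-}\overline{\mX}(t)\rVert_F^2=\Psi_x(t)$ that Lemma~\ref{lem:diff-opt} controls. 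Once this inequality is secured, the geometric telescoping, the Jensen step, and the substitution of $\eta$, $p$, and $S(T)$ are routine bookkeeping.
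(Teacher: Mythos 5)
Your proposal follows essentially the same route as the paper's proof: reduce to the perturbed SGD recursion for $\overline{\vx}(t)$ via column stochasticity, derive the one-step contraction in which every appearance of $\vz_i(t{+}1)$ is transferred back to $\overline{\vx}(t)$ through $\Psi_x(t)$, invoke Lemma~\ref{lem:diff-opt}, and finish with the $p^t$-weighted telescoping and Jensen. One local slip: for the transfer term $\langle\nabla f_i(\vz_i(t{+}1)),\overline{\vx}(t)-\vz_i(t{+}1)\rangle$ you invoke convexity, but convexity gives $\langle\nabla f_i(\vz_i),\overline{\vx}(t)-\vz_i\rangle\leq f_i(\overline{\vx}(t))-f_i(\vz_i)$, which is the wrong direction — you need a \emph{lower} bound on this inner product, which comes from $L$-smoothness and carries the penalty $-\tfrac{L}{2}\lVert\overline{\vx}(t)-\vz_i(t{+}1)\rVert^2$ (this is exactly how the paper's inequality~\eqref{eq:opt-scvx-6} is obtained, and it is why the paper's coefficient on $\Psi_x(t)$ is $\eta\tfrac{2\eta L^2+L+\mu}{n}$ rather than your $\tfrac{\mu\eta+2L^2\eta^2}{n}$). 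The fix is immediate and the extra $\tfrac{L\eta}{n}\Psi_x(t)=\mcO(\eta^3)$ contribution is absorbed into $C_3$ without changing the rate, so the argument goes through once you swap in smoothness there.
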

	
	\begin{proof}[Proof of Theorem~\ref{thm:strongly-convex}]
		Let \mbox{$\vx^\star\in\bbR^d$} be the minimizer of Problem~\eqref{eq:opt}. Then,
		\begin{align}\label{eq:opt-scvx-1}
			\overline{\vx}(t{+}1){-}\vx^\star = \overline{\vx}(t) -\vx^\star &- \frac{\eta}{n} \sum\limits_{i{=}1}^{n} \nabla\tilde{f}_i(\vz_{i}(t{+}1),\vxi_{i,t{+}1}) \Rightarrow\nonumber\\
			\overline{\vx}(t{+}1){-}\vx^\star = \overline{\vx}(t) -\vx^\star &- \frac{\eta}{n} \sum\limits_{i{=}1}^{n} \nabla f_i(\vz_{i}(t{+}1)) \nonumber\\
			+ \frac{\eta}{n} \sum\limits_{i{=}1}^{n} \nabla f_i(\vz_{i}(t{+}1)) &- \frac{\eta}{n} \sum\limits_{i{=}1}^{n} \nabla\tilde{f}_i(\vz_{i}(t{+}1),\vxi_{i,t{+}1}),
		\end{align}
		where by applying the norm operator and taking expectation on the second moment, we have
		\begin{align}\label{eq:opt-scvx-2}
			\bbE&\left\lVert\overline{\vx}(t{+}1){-}\vx^\star\right\rVert^2\nonumber\\
			&= \bbE\left\lVert\overline{\vx}(t) -\vx^\star - \frac{\eta}{n} \sum\limits_{i{=}1}^{n} \nabla f_i(\vz_{i}(t{+}1))\right\rVert^2 \nonumber\\
			&+ \frac{\eta^2}{n^2} \bbE\left\lVert\sum\limits_{i{=}1}^{n} \nabla f_i(\vz_{i}(t{+}1)) - \sum\limits_{i{=}1}^{n} \nabla\tilde{f}_i(\vz_{i}(t{+}1),\vxi_{i,t{+}1})\right\rVert^2\nonumber\\
			&+\frac{2\eta}{n}\Bigg\langle \overline{\vx}(t) -\vx^\star - \frac{\eta}{n} \sum\limits_{i{=}1}^{n} \nabla f_i(\vz_{i}(t{+}1)),\nonumber\\
			&\qquad\quad\sum\limits_{i{=}1}^{n} \nabla f_i(\vz_{i}(t{+}1)) - \sum\limits_{i{=}1}^{n} \nabla\tilde{f}_i(\vz_{i}(t{+}1),\vxi_{i,t{+}1})\Bigg\rangle\nonumber\\
			\overset{\text{\footnotesize Assump.~\ref{assump:bounded-variance}},\eqref{eq:norm-2-n}}&{\leq} \bbE\left\lVert\overline{\vx}(t){-}\vx^\star {-} \frac{\eta}{n} \sum\limits_{i{=}1}^{n} \nabla f_i(\vz_{i}(t{+}1))\right\rVert^2 + \frac{\eta^2\sigma^2}{n}.
		\end{align}
		Moreover, we have:
		\begin{align}\label{eq:opt-scvx-3}
			\bbE\Bigg\lVert\overline{\vx}(t) &-\vx^\star - \frac{\eta}{n} \sum\limits_{i{=}1}^{n} \nabla f_i(\vz_{i}(t{+}1))\Bigg\rVert^2\nonumber\\
			&= \bbE\left\lVert\overline{\vx}(t) -\vx^\star\right\rVert^2 + \eta^2\, \bbE\left\lVert\frac{1}{n} \sum\limits_{i{=}1}^{n} \nabla f_i(\vz_{i}(t{+}1))\right\rVert^2 \nonumber\\
			&-2\eta \bbE\left\langle\overline{\vx}(t) -\vx^\star,\frac{1}{n} \sum\limits_{i{=}1}^{n} \nabla f_i(\vz_{i}(t{+}1))\right\rangle,
		\end{align}
		where the second term of the upper bound in~\eqref{eq:opt-scvx-3} can be bounded by using Assumptions~\ref{assump:l-smooth} and~\ref{assump:convex}, as follows:
		\begin{align}\label{eq:opt-scvx-4}
			f(\vx^\star) &\leq f(\overline{\vx}(t)) + \left\langle \nabla f(\vx^\star), \overline{\vx}(t)-\vx^\star\right\rangle \nonumber\\
			&- \frac{1}{2L} \left\lVert\nabla f(\overline{\vx}(t)) - \nabla f(\vx^\star)\right\rVert^2 \Rightarrow \nonumber\\
			\left\lVert\nabla f(\overline{\vx}(t))\right\rVert^2 &\leq 2L\left(f(\overline{\vx}(t))-f(\vx^\star)\right),
		\end{align}
		thus, we have
		\begin{align}\label{eq:opt-scvx-5}
			\bbE\Bigg\lVert\frac{1}{n}\sum\limits_{i{=}1}^n &\nabla f_i(\vz_{i}(t{+}1))\Bigg\rVert^2\nonumber\\
			\overset{\eqref{eq:norm-2-n}}&{\leq} 2\bbE\left\lVert\frac{1}{n}\sum\limits_{i{=}1}^n \left[\nabla f_i(\vz_{i}(t{+}1)) - \nabla f_i(\overline{\vx}(t))\right]\right\rVert^2\nonumber\\
			& + 2\bbE\left\lVert\frac{1}{n}\sum\limits_{i{=}1}^n \nabla f_i(\overline{\vx}(t))\right\rVert^2\nonumber\\
			& = \frac{2}{n^2}\bbE\left\lVert\sum\limits_{i{=}1}^n \left[\nabla f_i(\vz_{i}(t{+}1)) - \nabla f_i(\overline{\vx}(t))\right]\right\rVert^2\nonumber\\
			& + 2\bbE\left\lVert\nabla f(\overline{\vx}(t))\right\rVert^2\nonumber\\
			\overset{\eqref{eq:norm-2-n},\eqref{eq:opt-scvx-4}}&{\leq} \frac{2}{n}\sum\limits_{i{=}1}^n\bbE\left\lVert \nabla f_i(\vz_{i}(t{+}1)) - \nabla f_i(\overline{\vx}(t))\right\rVert^2\nonumber\\
			& + 4L\left(\bbE f(\overline{\vx}(t))-f(\vx^\star)\right)\nonumber\\
			\overset{\text{\footnotesize Assump.~\ref{assump:l-smooth}}}&{\leq} \frac{2L^2}{n}\sum\limits_{i{=}1}^n\bbE\left\lVert \vz_{i}(t{+}1) - \overline{\vx}(t)\right\rVert^2\nonumber\\
			& + 4L\left(\bbE f(\overline{\vx}(t))-f(\vx^\star)\right).
		\end{align}
		Now, consider the following inequality, for all $i\in[n]$:
		\begin{align}\label{eq:opt-scvx-6}
			\langle\overline{\vx}(t) &{-}\vx^\star, \nabla f_i(\vz_{i}(t{+}1))\rangle\nonumber\\
			&= \left\langle\overline{\vx}(t) {-}\vz_{i}(t{+}1), \nabla f_i(\vz_{i}(t{+}1))\right\rangle\nonumber\\
			&+ \left\langle\vz_{i}(t{+}1) {-}\vx^\star,\nabla f_i(\vz_{i}(t{+}1))\right\rangle\nonumber\\
			\overset{\text{\footnotesize Assump.~\ref{assump:l-smooth}-\ref{assump:strong-convex}}}&{\geq} f_i(\overline{\vx}(t))- f_i(\vz_{i}(t{+}1))- \frac{L}{2} \left\lVert\overline{\vx}(t)-\vz_{i}(t{+}1)\right\rVert^2\nonumber\\
			& + f_i(\vz_{i}(t{+}1)) - f_i(\vx^\star) + \frac{\mu}{2} \left\lVert\vz_{i}(t{+}1)-\vx^\star\right\rVert^2\nonumber\\
			\overset{\eqref{eq:norm-2-n}}&{\geq} f_i(\overline{\vx}(t))- f_i(\vx^\star) - \frac{L}{2} \left\lVert\overline{\vx}(t)-\vz_{i}(t{+}1)\right\rVert^2\nonumber\\
			& + \frac{\mu}{2} \left[\frac{1}{2}\left\lVert\overline{\vx}(t)-\vx^\star\right\rVert^2 - \left\lVert\vz_{i}(t{+}1)-\overline{\vx}(t)\right\rVert^2\right]\nonumber\\
			&= f_i(\overline{\vx}(t))- f_i(\vx^\star) - \frac{L+\mu}{2} \left\lVert\overline{\vx}(t)-\vz_{i}(t{+}1)\right\rVert^2\nonumber\\
			& + \frac{\mu}{4}\left\lVert\overline{\vx}(t)-\vx^\star\right\rVert^2
		\end{align}
		thus, we have the following bound for the third term in~\eqref{eq:opt-scvx-3}:
		\begin{align}\label{eq:opt-scvx-7}
			\Bigg\langle\overline{\vx}(t) &{-}\vx^\star, \frac{1}{n}\sum\limits_{i{=}1}^n\nabla f_i(\vz_{i}(t{+}1))\Bigg\rangle\nonumber\\
			&= \frac{1}{n}\sum\limits_{i{=}1}^n\Bigg\langle\overline{\vx}(t) {-}\vx^\star, \nabla f_i(\vz_{i}(t{+}1))\Bigg\rangle\nonumber\nonumber\\
			\overset{\eqref{eq:opt-scvx-6}}&{\geq} f(\overline{\vx}(t))- f(\vx^\star) - \frac{L+\mu}{2n}\sum\limits_{i{=}1}^{n} \left\lVert\overline{\vx}(t)-\vz_{i}(t{+}1)\right\rVert^2\nonumber\\
			& + \frac{\mu}{4}\left\lVert\overline{\vx}(t)-\vx^\star\right\rVert^2.
		\end{align}
		Finally, according to~\eqref{eq:opt-scvx-2},~\eqref{eq:opt-scvx-3},~\eqref{eq:opt-scvx-5}, and~\eqref{eq:opt-scvx-7}, the following inequality holds:
		\begin{align}\label{eq:strongly-convex-base-ineq}
			\bbE\lVert\overline{\vx}&(t{+}1){-}\vx^\star\rVert^2\leq \left(1{-}\frac{\eta\mu}{2}\right)\bbE\lVert\overline{\vx}(t){-}\vx^\star\rVert^2 + \frac{\eta^2\sigma^2}{n}\nonumber\\
			&{-}2\eta(1{-}2L\eta)(\bbE f(\overline{\vx}(t)){-}f(\vx^\star)) + \eta\frac{2\eta L^2{+}L{+}\mu}{n}\Psi_{x}(t).
		\end{align}
		Now, let us consider inequality~\eqref{eq:strongly-convex-base-ineq} for \mbox{$t=0,1,\dots,T{-}1$}, and fix \mbox{$p=1{-}\frac{\eta\mu}{2}$}. Then, by taking a weighted average of these inequalities with weight $p^t$, we have
		\begin{align}\label{eq:strongly-convex-final-ineq}
			2(1&{-}2L\eta)\left[\bbE f\left(\frac{1}{\mathsmaller{\sum\limits_{s=0}^{T{-}1}p^s}}\sum\limits_{t=0}^{T{-}1}p^t\overline{\vx}(T{-}t{-}1)\right)-f(\vx^\star)\right]\nonumber\\
			&\leq 2(1{-}2L\eta)\left[\frac{1}{\mathsmaller{\sum\limits_{s=0}^{T{-}1}p^s}}\sum\limits_{t=0}^{T{-}1}p^t \bbE f(\overline{\vx}(T{-}t{-}1))-f(\vx^\star)\right]\nonumber\\
			&\leq \frac{\mu p^T \lVert\vx^\star\rVert^2}{2(1{-}p^T)} + \frac{\eta \sigma^2}{n}\nonumber\\
			& + \frac{(2\eta L^2{+}L{+}\mu)(1632\,C^2 (C{+}1)^2\beta^2(\beta{+}1)^2n G^2)}{n\omega^2\delta^4\kappa^2}\eta^2,
		\end{align}
		which for $\eta\coloneqq\frac{2\log T}{\mu T}$, we can conclude the statement in Theorem~\ref{thm:strongly-convex}. Note that the choice of optimization stepsize $\eta$ makes a trade-off between the order of the first and second expressions on the right-hand side of~\eqref{eq:strongly-convex-final-ineq}, based on $T$. With a very small $\eta$, the second and third expressions on the right-hand side converge faster to zero, while $p^T$ in the first expression will require more rounds to converge to zero.
	\end{proof}
	
	Theorem~\ref{thm:strongly-convex} suggests a sublinear rate \mbox{$\mcO((\log T)/T)$}, that only differs in a logarithmic term compared to CHOCO-SGD for undirected graphs~\cite[Theorem~4]{koloskova2019decentralized}. We consider a constant optimization stepsize $\eta$, while the authors of~\cite{koloskova2019decentralized} select a decreasing sequence.

	Before stating the next theorem, note that the convergence rates presented here are based on a (possibly weighted) average of the variables across the time $t$ and agents $i$. This enables a more straightforward presentation for our analysis. Nevertheless, using Lemma~\ref{lem:diff-opt}, similar results can be shown for the variable $\vz_i(t)$.

	\begin{theorem}[Smooth and Convex Stochastic Optimization]\label{thm:convex}
		Let the compression operator $Q$ satisfy~\eqref{eq:q-comp}, $\mX(0)=\hat\mX(0)=\vect{0}$, and \mbox{$\vy{(0)}= \vect{1}$}. Then, under Assumptions\ref{assump:bounded-variance}-\ref{assump:convex}, the iterates of 
		update rule~\eqref{eq:update-opt}
		have the following property: for any \mbox{$\omega\in(0,1]$}, and \mbox{$T\geq n$},
		\begin{align*}
			&\bbE f\left(\frac{1}{nT}\sum_{t=0}^{T-1}\sum_{i=1}^{n}\vx_i(t)\right) - f(\vx^\star)\leq \frac{C_4}{\sqrt{nT}} + \frac{C_5}{T},
		\end{align*}
		where \mbox{$C_4\coloneqq \frac{16L^2\lVert\vx^\star\rVert^2+\sigma^2}{4L}$}, \mbox{$C_5\coloneqq \frac{153\,C^2(C{+}1)^2\beta^2(\beta{+}1)^2G^2n}{L\omega^2\delta^4\kappa^2}$}, \vspace{0.1em} when \mbox{$\eta\coloneqq \frac{\sqrt{n}}{4L\sqrt{T}}$}, and \mbox{$\gamma$} as in Lemma~\ref{lem:diff-opt}.
	\end{theorem}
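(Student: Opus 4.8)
The plan is to run the argument of Theorem~\ref{thm:strongly-convex} almost verbatim, using plain convexity (Assumption~\ref{assump:convex}) in place of strong convexity (Assumption~\ref{assump:strong-convex}) in the one–step descent estimate, and then to aggregate the resulting recursion by a plain telescoping sum (since with $\mu=0$ there is no geometric contraction factor $p$ to exploit).

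First I would revisit the inner–product bound~\eqref{eq:opt-scvx-6}. Splitting $\langle\overline{\vx}(t){-}\vx^\star,\nabla f_i(\vz_i(t{+}1))\rangle$ as $\langle\overline{\vx}(t){-}\vz_i(t{+}1),\nabla f_i(\vz_i(t{+}1))\rangle+\langle\vz_i(t{+}1){-}\vx^\star,\nabla f_i(\vz_i(t{+}1))\rangle$, bounding the first summand from below by $L$-smoothness (Assumption~\ref{assump:l-smooth}) and the second by convexity (Assumption~\ref{assump:convex}), and averaging over $i\in[n]$, yields $\langle\overline{\vx}(t){-}\vx^\star,\tfrac1n\sum_i\nabla f_i(\vz_i(t{+}1))\rangle\ge f(\overline{\vx}(t)){-}f(\vx^\star){-}\tfrac{L}{2n}\Psi_{x}(t)$. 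Feeding this into the unchanged chain~\eqref{eq:opt-scvx-2}--\eqref{eq:opt-scvx-3},~\eqref{eq:opt-scvx-5} (which only invoke Assumptions~\ref{assump:bounded-variance},~\ref{assump:l-smooth},~\ref{assump:convex} together with unbiasedness of the stochastic gradient) gives the convex analogue of~\eqref{eq:strongly-convex-base-ineq},
\begin{align*}
\bbE\lVert\overline{\vx}(t{+}1){-}\vx^\star\rVert^2
&\le \bbE\lVert\overline{\vx}(t){-}\vx^\star\rVert^2 + \tfrac{\eta^2\sigma^2}{n} \\
&\quad - 2\eta(1{-}2L\eta)\big(\bbE f(\overline{\vx}(t)){-}f(\vx^\star)\big) + \tfrac{\eta(2\eta L^2{+}L)}{n}\,\Psi_{x}(t).
\end{align*}

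Next, with $\eta=\tfrac{\sqrt n}{4L\sqrt T}$ and $T\ge n$ one has $2L\eta\le\tfrac12$, hence $1{-}2L\eta\ge\tfrac12$; I would isolate the optimality gap, sum over $t=0,\dots,T{-}1$, and telescope the quadratic terms, using $\overline{\vx}(0)=\vect{0}$ (from $\mX(0)=\vect{0}$, so the boundary term is $\lVert\vx^\star\rVert^2$) and discarding the nonnegative $\bbE\lVert\overline{\vx}(T){-}\vx^\star\rVert^2$. The consensus error $\Psi_{x}(t)=\sum_i\bbE\lVert\vz_i(t{+}1){-}\overline{\vx}(t)\rVert^2$ is bounded uniformly in $t$ by Lemma~\ref{lem:diff-opt} as $\Psi_{x}(t)\le\tfrac{1632\,C^2(C{+}1)^2\beta^2(\beta{+}1)^2 nG^2\eta^2}{\omega^2\delta^4\kappa^2}$ (this is precisely where Assumption~\ref{assump:bounded-gradient} and the prescribed $\gamma$ from Lemma~\ref{lem:diff-opt} are used). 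Dividing by $2\eta(1{-}2L\eta)T$, applying Jensen's inequality to the convex $f$ at the running average $\tfrac1T\sum_{t}\overline{\vx}(t)=\tfrac{1}{nT}\sum_{t}\sum_{i}\vx_i(t)$, and substituting the stated $\eta$, the three surviving contributions are $\tfrac{4L\lVert\vx^\star\rVert^2}{\sqrt{nT}}$ from the boundary term, $\tfrac{\sigma^2}{4L\sqrt{nT}}$ from the noise term, and an $O(1/T)$ term from the consensus term (which carries the extra $\eta^2$ from Lemma~\ref{lem:diff-opt}); collecting constants yields $C_4$ and $C_5$ as stated.

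The bulk of this is routine bookkeeping already carried out in the proof of Theorem~\ref{thm:strongly-convex}. The only delicate point is the calibration of $\eta$: it must be small enough that $1{-}2L\eta$ is bounded away from zero (which forces the restriction $T\ge n$) and that the consensus contribution — amplified by the extra $\eta^2$ coming from Lemma~\ref{lem:diff-opt} — collapses onto the $1/T$ scale, while still being large enough that the boundary term $\lVert\vx^\star\rVert^2/(\eta T)$ decays at the optimal $1/\sqrt{nT}$ rate; the choice $\eta=\Theta(\sqrt{n/T})$ is what simultaneously balances all three terms. A secondary bookkeeping point, handled exactly as after~\eqref{eq:strongly-convex-final-ineq}, is that with $\mu=0$ the plain (unweighted) time average is the right object for Jensen, so no weighting sequence $p^t$ is needed here.
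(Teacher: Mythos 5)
Your proposal is correct and follows essentially the same route as the paper: the one-step inequality you derive (convexity plus smoothness for the inner product, Assumption~\ref{assump:bounded-variance} for the noise, \eqref{eq:opt-scvx-5} for the averaged gradient norm) is exactly the paper's \eqref{eq:convex-base-ineq}, and the subsequent unweighted telescoping, Jensen step, invocation of Lemma~\ref{lem:diff-opt}, and the choice $\eta=\sqrt{n}/(4L\sqrt{T})$ with $T\geq n$ (so $1-2L\eta\geq 1/2$) reproduce $C_4$ and $C_5$ as stated. No gaps.
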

	
	\begin{proof}[Proof of Theorem~\ref{thm:convex}]
		Let \mbox{$\vx^\star\in\bbR^d$} be a global minimum of $f(.)$, i.e., \mbox{$f^\star = f(\vx^\star)$}. According to the update rule in~\eqref{eq:update-opt}
		\begin{align}\label{eq:opt-cvx-1}
			\overline{\vx}(t{+}1){-}\vx^\star = \overline{\vx}(t) &-\vx^\star - \frac{\eta}{n} \sum\limits_{i{=}1}^{n} \nabla\tilde{f}_i(\vz_{i}(t{+}1),\vxi_{i,t{+}1}) \Rightarrow\nonumber\\
			\bbE \lVert\overline{\vx}(t{+}1){-}\vx^\star\rVert^2 &= \bbE \lVert\overline{\vx}(t){-}\vx^\star\rVert^2\nonumber\\
			- \frac{2\eta}{n} &\sum\limits_{i{=}1}^n \bbE \left\langle\nabla\tilde{f}_i(\vz_{i}(t{+}1),\vxi_{i,t{+}1}),\overline{\vx}(t){-}\vx^\star\right\rangle\nonumber\\
			&+ \eta^2 \,\bbE\, \left\lVert \frac{1}{n}\sum\limits_{i{=}1}^n \nabla\tilde{f}_i(\vz_{i}(t{+}1),\vxi_{i,t{+}1})\right\rVert^2\nonumber\\
			&= \bbE \lVert\overline{\vx}(t){-}\vx^\star\rVert^2\nonumber\\
			&- \frac{2\eta}{n} \sum\limits_{i{=}1}^n \bbE \left\langle\nabla f_i(\vz_{i}(t{+}1)),\overline{\vx}(t){-}\vx^\star\right\rangle\nonumber\\
			&+ \eta^2 \,\bbE\, \left\lVert \frac{1}{n}\sum\limits_{i{=}1}^n \nabla\tilde{f}_i(\vz_{i}(t{+}1),\vxi_{i,t{+}1})\right\rVert^2.
		\end{align}
		First, we have
		\begin{align}\label{eq:opt-cvx-2}
			\bbE&\, \left\lVert\frac{1}{n}\sum\limits_{i{=}1}^n \nabla\tilde{f}_i(\vz_{i}(t{+}1),\vxi_{i,t{+}1})\right\rVert^2\nonumber\\
			\overset{}&{=}  \bbE\, \left\lVert\frac{1}{n}\sum\limits_{i{=}1}^n \left( \nabla\tilde{f}_i(\vz_{i}(t{+}1),\vxi_{i,t{+}1})-\nabla f_i(\vz_{i}(t{+}1))\right)\right\rVert^2\nonumber\\
			& + \bbE\, \left\lVert\frac{1}{n}\sum\limits_{i{=}1}^n \nabla f_i(\vz_{i}(t{+}1))\right\rVert^2\nonumber\\
			\overset{\text{\footnotesize Assump.~\ref{assump:bounded-variance}}}&{\leq}\frac{\sigma^2}{n} + \bbE\, \left\lVert\frac{1}{n}\sum\limits_{i{=}1}^n \nabla f_i(\vz_{i}(t{+}1))\right\rVert^2,
		\end{align}
		where the first equality holds due to the unbiasedness of stochastic gradients. Recall from~\eqref{eq:opt-scvx-5} that according to Assumptions~\ref{assump:l-smooth} and~\ref{assump:convex}, we have
		\begin{align}\label{eq:opt-cvx-4}
			\bbE\Bigg\lVert\frac{1}{n}\sum\limits_{i{=}1}^n &\nabla f_i(\vz_{i}(t{+}1))\Bigg\rVert^2\nonumber\\
			\overset{\eqref{eq:opt-scvx-5}}&{\leq}\frac{2L^2}{n}\sum\limits_{i{=}1}^n\bbE\left\lVert \vz_{i}(t{+}1) - \overline{\vx}(t)\right\rVert^2\nonumber\\
			& + 4L\left(\bbE f(\overline{\vx}(t))-f(\vx^\star)\right).
		\end{align}
		Moreover, due to Assumptions~\ref{assump:l-smooth} and~\ref{assump:convex}
		\begin{align}\label{eq:opt-cvx-5}
			\left\langle\nabla f_i(\vz_{i}(t{+}1)),\overline{\vx}(t){-}\vx^\star\right\rangle &\geq  f_i(\overline{\vx}(t))- f_i(\vx^\star)\nonumber\\
			&- \frac{L}{2} \left\lVert\overline{\vx}(t)-\vz_{i}(t{+}1)\right\rVert^2\nonumber \Rightarrow\\
			\frac{1}{n} \sum\limits_{i{=}1}^n \bbE\big\langle\nabla f_i(\vz_{i}(t{+}1)),\overline{\vx}(t)&{-}\vx^\star\big\rangle \geq \bbE f(\overline{\vx}(t))-f(\vx^\star) \nonumber\\
			& - \frac{L}{2n} \underbrace{\bbE\left\lVert\overline{\mX}(t)-\mZ(t{+}1)\right\rVert_F^2}_{\Psi_{x}(t)\text{ in Lemma~\ref{lem:diff-opt}}}.
		\end{align}
		Therefore, according to~\eqref{eq:opt-cvx-1},~\eqref{eq:opt-cvx-2},~\eqref{eq:opt-cvx-4}, and~\eqref{eq:opt-cvx-5}, the following inequality holds:
		\begin{align}\label{eq:convex-base-ineq}
			\bbE\lVert\overline{\vx}(t&{+}1){-}\vx^\star\rVert^2\leq \bbE \lVert\overline{\vx}(t){-}\vx^\star\rVert^2 + \frac{\eta^2\sigma^2}{n}\nonumber\\
			&{-}2\eta(1{-}2L\eta)(\bbE f(\overline{\vx}(t)){-}f(\vx^\star)) + \frac{\eta L}{n}(1{+}2\eta L)\Psi_{x}(t).
		\end{align}
		Considering the average of~\eqref{eq:convex-base-ineq} over $T$ consequent iterations \mbox{$t=0,1,\dots,T{-}1$}, we have
		\begin{align}\label{eq:convex-final-ineq}
			\bbE &f\left(\frac{1}{T}\sum\limits_{t=0}^{T{-}1}\overline{\vx}(t)\right)-f(\vx^\star)
			\leq \frac{1}{T}\sum\limits_{t=0}^{T{-}1}\bbE f(\overline{\vx}(t))-f(\vx^\star)\nonumber\\
			&\leq \frac{\lVert\vx^\star\rVert^2}{2T\eta(1{-}2L\eta)} + \frac{\eta^2 \sigma^2}{2n\eta(1{-}2L\eta)} + \frac{L(1{+}2\eta L)}{2nT(1{-}2\eta L)}\Psi_{x}(t),
		\end{align}
		where by plugging the result of Lemma~\ref{lem:diff-opt} in~\eqref{eq:convex-final-ineq} and \mbox{$\eta=\frac{\sqrt{n}}{4L\sqrt{T}}$}, we conclude the proof of Theorem~\ref{thm:convex}.
	\end{proof}
	
	Theorem~\ref{thm:convex} presents the convergence rate of our algorithm under a milder assumption. We obtain a sublinear convergence rate ($1/\sqrt{T}$) similar to~\cite{taheri2020quantized}, with network and compression dependencies in the faster term ($1/T$). Under the same assumptions, we can see that CHOCO-SGD also has the same convergence rate (see Table~\ref{tab:convex-comparison}). Next, we analyze our algorithm by dropping Assumption~\ref{assump:convex}.

	\begin{theorem}[Smooth and Non-Convex Stochastic Optimization]\label{thm:nonconvex}
		Let the compression operator $Q$ satisfy~\eqref{eq:q-comp}, \mbox{$\mX(0)=\hat\mX(0)\coloneqq\vect{0}$}, and \mbox{$\vy{(0)}= \vect{1}$}. Then, under Assumptions~\ref{assump:bounded-variance}-\ref{assump:l-smooth}, the following property holds for the iterates of 
		update rule~\eqref{eq:update-opt}: for any \mbox{$\omega\in(0,1]$}, and \mbox{$T\geq n$},
		\begin{align*}
			\frac{1}{T}\sum_{t=0}^{T-1} \,\mathbb{E}\left\|\nabla f\left(\frac{1}{n}\sum_{i=1}^n \vx_i(t)\right)\right\|^{2} \leq \frac{C_6}{\sqrt{nT}} + \frac{C_7}{T},
		\end{align*}
		\mbox{$C_6\coloneqq 2L(f(\vect{0}){-}f^\star){+}\sigma^2$}, \mbox{$C_7\coloneqq \frac{3264\,C^2(C{+}1)^2\beta^2(\beta{+}1)^2G^2n}{\omega^2\delta^4\kappa^2}$},\\
		when \mbox{$\eta\coloneqq \frac{\sqrt{n}}{L\sqrt{T}}$}, and \mbox{$\gamma$} as in Lemma~\ref{lem:diff-opt}.
	\end{theorem}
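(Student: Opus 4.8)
The plan is to run the textbook descent argument for non-convex stochastic gradient descent on the running average $\overline{\vx}(t)=\tfrac1n\sum_{i=1}^n\vx_i(t)$, in the same spirit as the proofs of Theorems~\ref{thm:strongly-convex} and~\ref{thm:convex} but without convexity; the consensus estimate is outsourced entirely to Lemma~\ref{lem:diff-opt}. Since $\mW$ is column stochastic, the mixing and push-sum steps in~\eqref{eq:update-opt} preserve the unweighted average, so $\overline{\vx}(t{+}1)=\overline{\vx}(t)-\tfrac{\eta}{n}\sum_{i=1}^n\nabla\tilde f_i(\vz_i(t{+}1),\vxi_{i,t{+}1})$; I will write $g_t$ for this stochastic direction and $\bar g_t=\tfrac1n\sum_{i=1}^n\nabla f_i(\vz_i(t{+}1))$ for its conditional mean. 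Note also $\overline{\vx}(0)=\vect{0}$ because $\mX(0)=\vect{0}$, and that $f=\tfrac1n\sum_i f_i$ is $L$-smooth by Assumption~\ref{assump:l-smooth}.

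First I would apply the descent lemma for $L$-smooth $f$, substitute the update, and take expectation conditional on the history up to round $t$: by unbiasedness the linear term becomes $-\eta\langle\nabla f(\overline{\vx}(t)),\bar g_t\rangle$, and the quadratic term splits as $\tfrac{L\eta^2}{2}\big(\tfrac{\sigma^2}{n}+\|\bar g_t\|^2\big)$ using Assumption~\ref{assump:bounded-variance} and independence of the stochastic gradients across agents. The crucial observation is that the stepsize $\eta=\tfrac{\sqrt n}{L\sqrt T}$ satisfies $L\eta=\sqrt{n/T}\le 1$ as soon as $T\ge n$, so $\tfrac{L\eta^2}{2}\|\bar g_t\|^2\le\tfrac{\eta}{2}\|\bar g_t\|^2$ and I can complete the square, $-\eta\langle\nabla f(\overline{\vx}(t)),\bar g_t\rangle+\tfrac{\eta}{2}\|\bar g_t\|^2=-\tfrac{\eta}{2}\|\nabla f(\overline{\vx}(t))\|^2+\tfrac{\eta}{2}\|\bar g_t-\nabla f(\overline{\vx}(t))\|^2$. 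The residual $\|\bar g_t-\nabla f(\overline{\vx}(t))\|^2=\big\|\tfrac1n\sum_i[\nabla f_i(\vz_i(t{+}1))-\nabla f_i(\overline{\vx}(t))]\big\|^2$ is then controlled by~\eqref{eq:norm-2-n} and $L$-smoothness by $\tfrac{L^2}{n}\sum_i\|\vz_i(t{+}1)-\overline{\vx}(t)\|^2$, whose expectation is exactly $\tfrac{L^2}{n}\Psi_{x}(t)$. Taking total expectation yields the one-step inequality $\bbE f(\overline{\vx}(t{+}1))\le\bbE f(\overline{\vx}(t))-\tfrac{\eta}{2}\bbE\|\nabla f(\overline{\vx}(t))\|^2+\tfrac{\eta L^2}{2n}\Psi_{x}(t)+\tfrac{L\eta^2\sigma^2}{2n}$, the non-convex analogue of~\eqref{eq:convex-base-ineq}.

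Finally I would telescope over $t=0,\dots,T{-}1$, use $f(\overline{\vx}(0))=f(\vect{0})$ and $f\ge f^\star$, divide through by $\tfrac{\eta T}{2}$, and substitute the uniform bound on $\Psi_{x}(t)$ from Lemma~\ref{lem:diff-opt} together with $\eta=\tfrac{\sqrt n}{L\sqrt T}$ (so $\eta^2=\tfrac{n}{L^2T}$): the term $\tfrac{2(f(\vect{0})-f^\star)}{\eta T}$ collapses to $\tfrac{2L(f(\vect{0})-f^\star)}{\sqrt{nT}}$, the term $\tfrac{L\eta\sigma^2}{n}$ to $\tfrac{\sigma^2}{\sqrt{nT}}$, and $\tfrac{L^2}{n}\Psi_{x}(t)$ to $\mcO\big(\tfrac{C^2(C{+}1)^2\beta^2(\beta{+}1)^2G^2 n}{\omega^2\delta^4\kappa^2 T}\big)$, which assemble into $\tfrac{C_6}{\sqrt{nT}}+\tfrac{C_7}{T}$ with the stated constants. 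The main obstacle is purely the bookkeeping of constants, and in particular recognizing that it is $L\eta\le 1$ (not the stronger $L\eta\le\tfrac14$ that a naive Young's-inequality treatment of the cross term would force) which makes the mild requirement $T\ge n$ sufficient; all the network and compression difficulty is already packaged in Lemma~\ref{lem:diff-opt}, so no further hard estimate remains.
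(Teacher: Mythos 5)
Your proposal is correct and follows essentially the same route as the paper's proof: descent lemma on the average iterate, splitting the stochastic gradient's second moment via Assumption~\ref{assump:bounded-variance}, the polarization identity on the cross term (your ``completing the square'' is the paper's inequality~\eqref{eq:opt-noncvx-3}), $L$-smoothness to reduce the residual to $\Psi_x(t)$ from Lemma~\ref{lem:diff-opt}, and telescoping with $\eta=\sqrt{n}/(L\sqrt{T})$. The only cosmetic difference is that you absorb the $\|\bar g_t\|^2$ term early using $L\eta\le 1$ (which is exactly where $T\ge n$ enters), whereas the paper keeps the nonnegative term $(1-\eta L)\,\bbE\lVert\tfrac1n\sum_i\nabla f_i(\vz_i(t{+}1))\rVert^2$ on the left-hand side of~\eqref{eq:nonconvex-base-ineq} and discards it at the end; this even yields a slightly tighter constant than the stated $C_7$.
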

	
	\begin{proof}[Proof of Theorem~\ref{thm:nonconvex}]
		First, note that:
		\begin{align}\label{eq:opt-noncvx-1}
			\overline{\vx}(t{+}1) &= \overline{\vx}(t)  - \frac{\eta}{n} \sum\limits_{i{=}1}^{n} \nabla\tilde{f}_i(\vz_{i}(t{+}1),\vxi_{i,t{+}1}),
		\end{align}
		thus, by Assumption~\ref{assump:l-smooth}, we have
		\begin{align}\label{eq:opt-noncvx-2}
			\bbE f(\overline{\vx}(t{+}1)) &\leq \bbE f(\overline{\vx}(t)) \nonumber\\
			&-\frac{\eta}{n} \bbE\left\langle\nabla f(\overline{\vx}(t)), \sum\limits_{i{=}1}^{n}\nabla\tilde{f}_i(\vz_{i}(t{+}1),\vxi_{i,t{+}1})\right\rangle\nonumber\\
			&+ \frac{\eta^2 L}{2} \bbE \left\lVert\frac{1}{n}\sum\limits_{i{=}1}^{n}\nabla\tilde{f}_i(\vz_{i}(t{+}1),\vxi_{i,t{+}1})\right\rVert^2\nonumber\\
			&\leq \bbE f(\overline{\vx}(t)) \nonumber\\
			&-\frac{\eta}{n} \bbE\left\langle\nabla f(\overline{\vx}(t)), \sum\limits_{i{=}1}^{n}\nabla f_i(\vz_{i}(t{+}1))\right\rangle\nonumber\\
			&+ \frac{\eta^2 L}{2} \bbE \left\lVert\frac{1}{n}\sum\limits_{i{=}1}^{n}\nabla\tilde{f}_i(\vz_{i}(t{+}1),\vxi_{i,t{+}1})\right\rVert^2.
		\end{align}
		Moreover, the following inequality holds:
		\begin{align}\label{eq:opt-noncvx-3}
			\bbE&\Bigg\langle\nabla f(\overline{\vx}(t)), \frac{1}{n}\sum\limits_{i{=}1}^{n}\nabla f_i(\vz_{i}(t{+}1))\Bigg\rangle\nonumber\\
			&= \frac{1}{2} \bbE \left\lVert\nabla f(\overline{\vx}(t))\right\rVert^2 + \frac{1}{2} \bbE \left\lVert\frac{1}{n}\sum\limits_{i{=}1}^{n}\nabla f_i(\vz_{i}(t{+}1))\right\rVert^2\nonumber\\
			&-\frac{1}{2} \bbE \left\lVert\nabla f(\overline{\vx}(t)) - \frac{1}{n}\sum\limits_{i{=}1}^{n}\nabla f_i(\vz_{i}(t{+}1))\right\rVert^2\nonumber\\
			&= \frac{1}{2} \bbE \left\lVert\nabla f(\overline{\vx}(t))\right\rVert^2 + \frac{1}{2} \bbE \left\lVert\frac{1}{n}\sum\limits_{i{=}1}^{n}\nabla f_i(\vz_{i}(t{+}1))\right\rVert^2\nonumber\\
			&-\frac{1}{2} \bbE \left\lVert\frac{1}{n}\sum\limits_{i{=}1}^{n}\left[\nabla f_i(\overline{\vx}(t)) - \nabla f_i(\vz_{i}(t{+}1))\right]\right\rVert^2\nonumber\\
			&\geq\frac{1}{2} \bbE \left\lVert\nabla f(\overline{\vx}(t))\right\rVert^2 + \frac{1}{2} \bbE \left\lVert\frac{1}{n}\sum\limits_{i{=}1}^{n}\nabla f_i(\vz_{i}(t{+}1))\right\rVert^2\nonumber\\
			&-\frac{1}{2n}\sum\limits_{i{=}1}^{n}\bbE \left\lVert\nabla f_i(\overline{\vx}(t)) - \nabla f_i(\vz_{i}(t{+}1))\right\rVert^2\nonumber\\
			\overset{\text{\footnotesize Assump.~\ref{assump:l-smooth}}}&{\geq}\frac{1}{2} \bbE \left\lVert\nabla f(\overline{\vx}(t))\right\rVert^2 + \frac{1}{2} \bbE \left\lVert\frac{1}{n}\sum\limits_{i{=}1}^{n}\nabla f_i(\vz_{i}(t{+}1))\right\rVert^2\nonumber\\
			&-\frac{L^2}{2n}\sum\limits_{i{=}1}^{n}\bbE\left\lVert\overline{\vx}(t) - \vz_{i}(t{+}1)\right\rVert^2.
		\end{align}
		Hence, according to~\eqref{eq:opt-cvx-2},~\eqref{eq:opt-noncvx-2},~\eqref{eq:opt-noncvx-3}, we have
		\begin{align}\label{eq:nonconvex-base-ineq}
			\bbE\lVert\nabla& f(\overline{\vx}(t))\rVert^2 + (1{-}\eta L) \bbE \left\lVert\frac{1}{n}\sum\limits_{i{=}1}^{n}\nabla f_i(\vz_{i}(t{+}1))\right\rVert^2\nonumber\\
			&\leq \frac{2\bbE f(\overline{\vx}(t))-2\bbE f(\overline{\vx}(t{+}1))}{\eta}
			+ \frac{\eta\,\sigma^2 L}{n} + \frac{2L^2}{n}\Psi_{x}(t).
		\end{align}
		Therefore, with a similar approach to~\eqref{eq:convex-final-ineq}, we obtain
		\begin{align}\label{eq:nonconvex-final-ineq}
			\frac{1}{T}\sum_{t=0}^{T-1}&\left[\bbE\lVert\nabla f(\overline{\vx}(t))\rVert^2 + (1{-}\eta L) \bbE \left\lVert\frac{\sum\limits_{i{=}1}^{n}\nabla f_i(\vz_{i}(t{+}1))}{n}\right\rVert^2\right]\nonumber\\
			&\leq \frac{2\bbE f(\overline{\vx}(0))-2\bbE f(\overline{\vx}(T))}{\eta T} + \frac{\eta\,\sigma^2 L}{n}\nonumber\\
			&+ \frac{3264\,C^2 (C{+}1)^2\beta^2(\beta{+}1)^2 G^2\eta^2L^2}{\omega^2\delta^4\kappa^2},
		\end{align}
		which concludes the proof of Theorem~\ref{thm:nonconvex} when \mbox{$\eta=\frac{\sqrt{n}}{L\sqrt{T}}$}.
	\end{proof}

	Theorem~\ref{thm:nonconvex} suggests a sublinear convergence rate to a first-order stationary point in the non-convex regime. Table~\ref{tab:nonconvex-comparison} summarize the comparison between~\cite{koloskova2019decentralized2,taheri2020quantized}, and ours. Next, we show an outline of the proof.
	

	\section{Numerical Experiments}\label{sec:experiments}
	In this section, we verify the performance of our proposed algorithm through two sets of numerical experiments. We first consider the decentralized average consensus problem and show that our algorithm can achieve convergence under arbitrary compression. Then, we validate the communication efficiency of our algorithm on a decentralized logistic regression problem.

	\subsection{Consensus}\label{sec:experiments-cons}
	We first consider an average consensus problem with \mbox{$d=300$} parameters over a directed Ring graphs with different number of agents \mbox{$n\in\{20,50,100,200,500\}$}. We consider a grid over \mbox{$(0,1]$} for compression ratio $\omega$, thus \mbox{$\mathrm{top}_{100\omega\%}$}~\cite{toghani2021communication} as the proper compression operator. We quantify the number of round required for each pair \mbox{$(n,\omega)$}, to reach an $\varepsilon$-accuracy where \mbox{$\varepsilon{=}1\cdot10^{-5}$}. We compare the performance of our algorithm compared to~\cite{taheri2020quantized}. To have a fair comparison, we do not fine-tune \mbox{$\gamma$}, and simply select \mbox{$\gamma=\omega$} for this experiment. 
	
	Given the described setup, Figure~\ref{fig:cons-omega} shows the number of rounds required for each algorithm to reach an $\varepsilon$-consensus. As depicted in Figure~\ref{fig:cons-omega}, each solid line shows the number of rounds required for $n$ agents to reach consensus on a directed Ring with $\omega$-compressed messages. This figure shows the importance of consensus stepsize for the algorithm to reach consensus under any arbitrary compression ratio $\omega$.
	
	\begin{figure}[!t]
		\centering
		\includegraphics[width=0.99\linewidth]{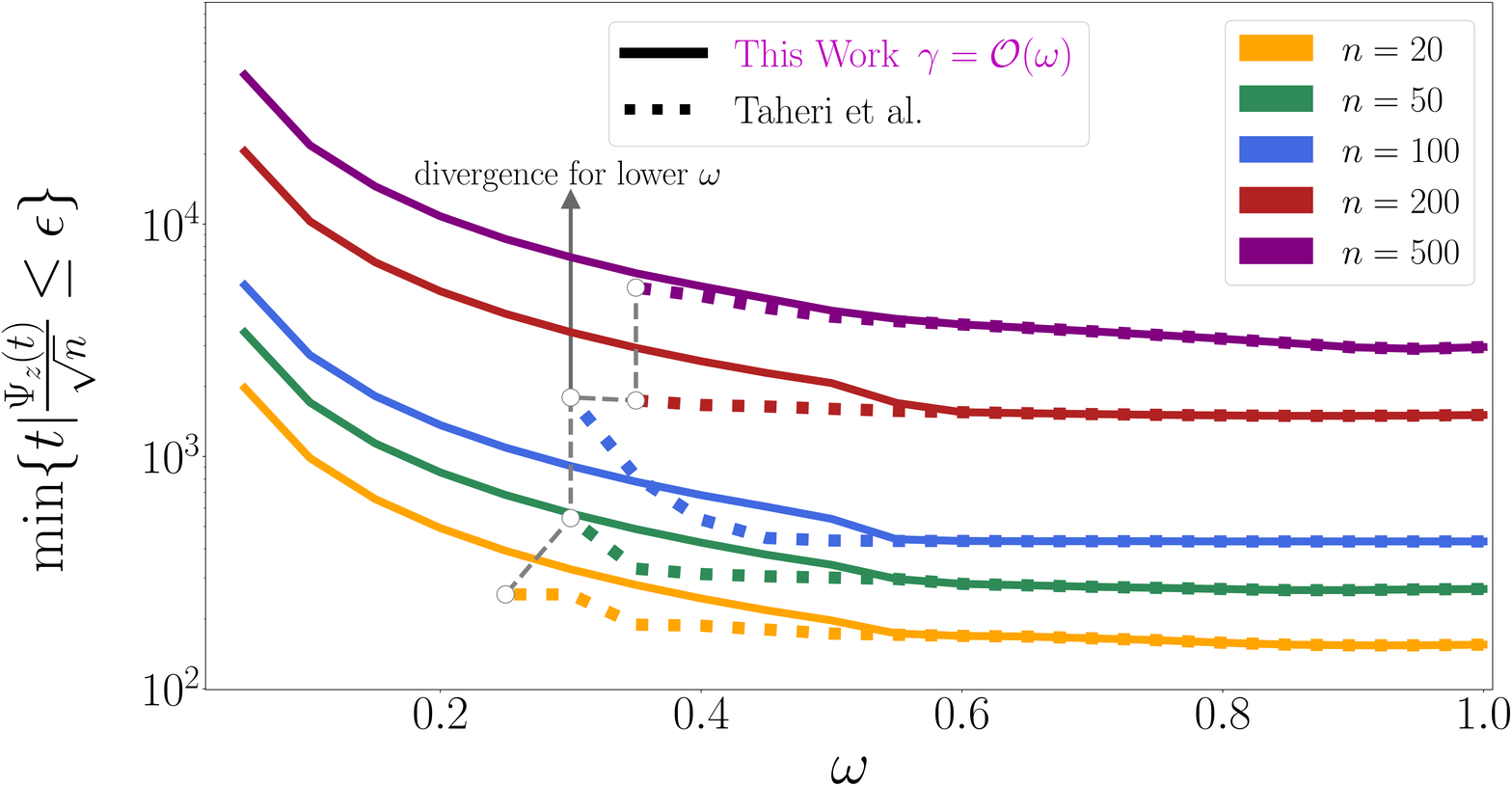}
		\put(-98,111.2){{\tiny\cite{taheri2020quantized}}}
		\caption{\textbf{Arbitrary compression ratio:} The number of rounds required for each algorithm to reach an \mbox{$\varepsilon$-accuracy} \mbox{($\varepsilon{=}10^{{-}5}$)} for an average consensus problem with \mbox{$d=300$} over directed Ring graphs with $n$ agents using the compression operator \mbox{$\mathrm{top}_{100\omega\%}$}. For compression ratios \mbox{$\omega\in(0,1]$}, we compare~\cite{taheri2020quantized} and our compressed push-sum consensus with \mbox{$\gamma = \mcO(\omega)$}. Each line associates with a fixed $n$. For small compression ratios $\omega$, the method in~\cite{taheri2020quantized} is not guaranteed to converge.}
		\label{fig:cons-omega}
	\end{figure}

	\subsection{Regularized Logistic Regression}\label{sec:experiments-logreg}
	
	\begin{figure*}[!ht]
		\centering
		\begin{minipage}{0.9\textwidth}
			\hspace{-2em}
			\includegraphics[width=0.5\linewidth]{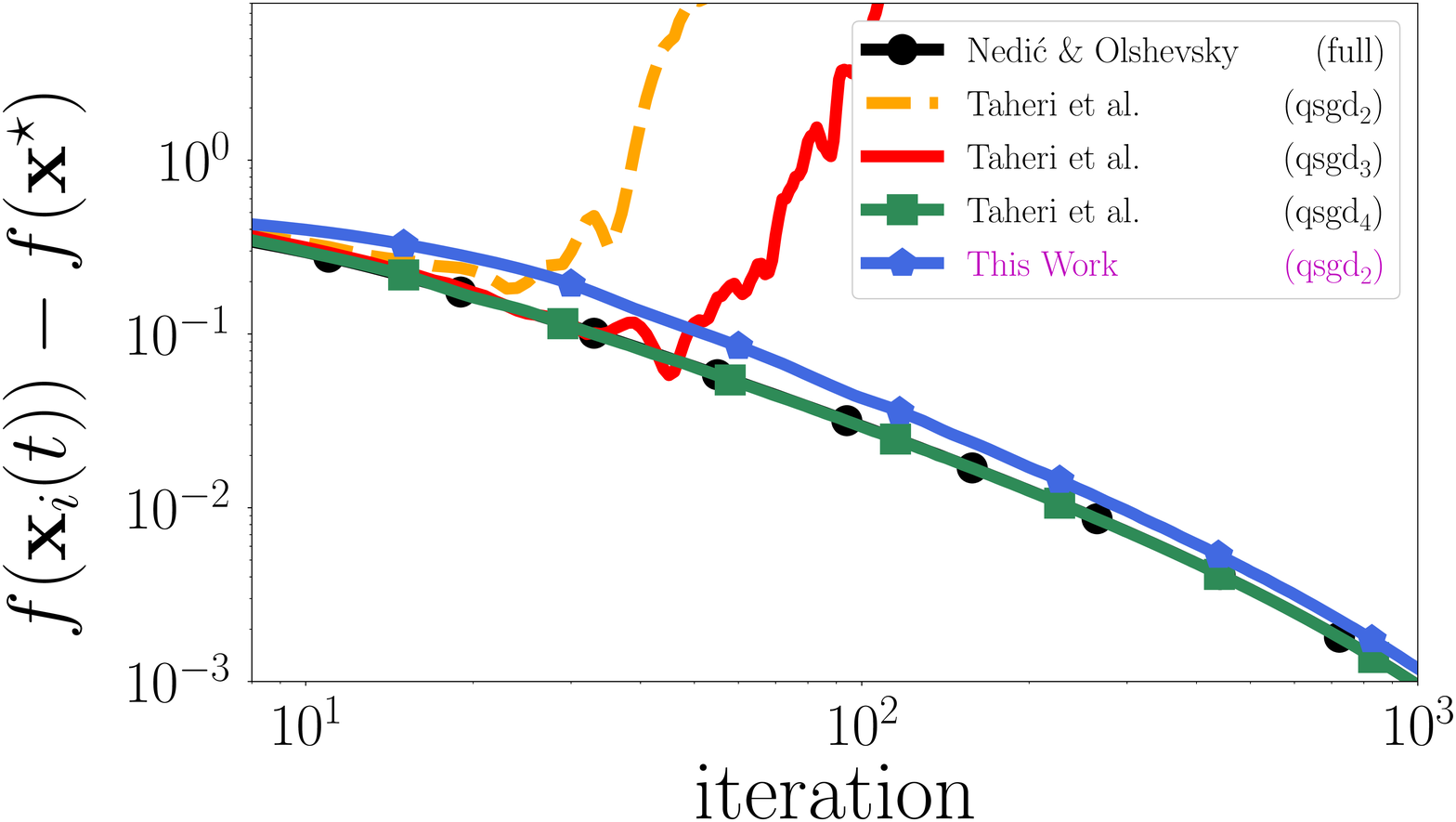}
			\put(-37,123){{\tiny\cite{nedic2016stochastic}}}
			\put(-52,114.8){{\tiny\cite{taheri2020quantized}}}
			\put(-52,106.5){{\tiny\cite{taheri2020quantized}}}
			\put(-52,98.4){{\tiny\cite{taheri2020quantized}}}
			\hspace{2em}
			\includegraphics[width=0.5\linewidth]{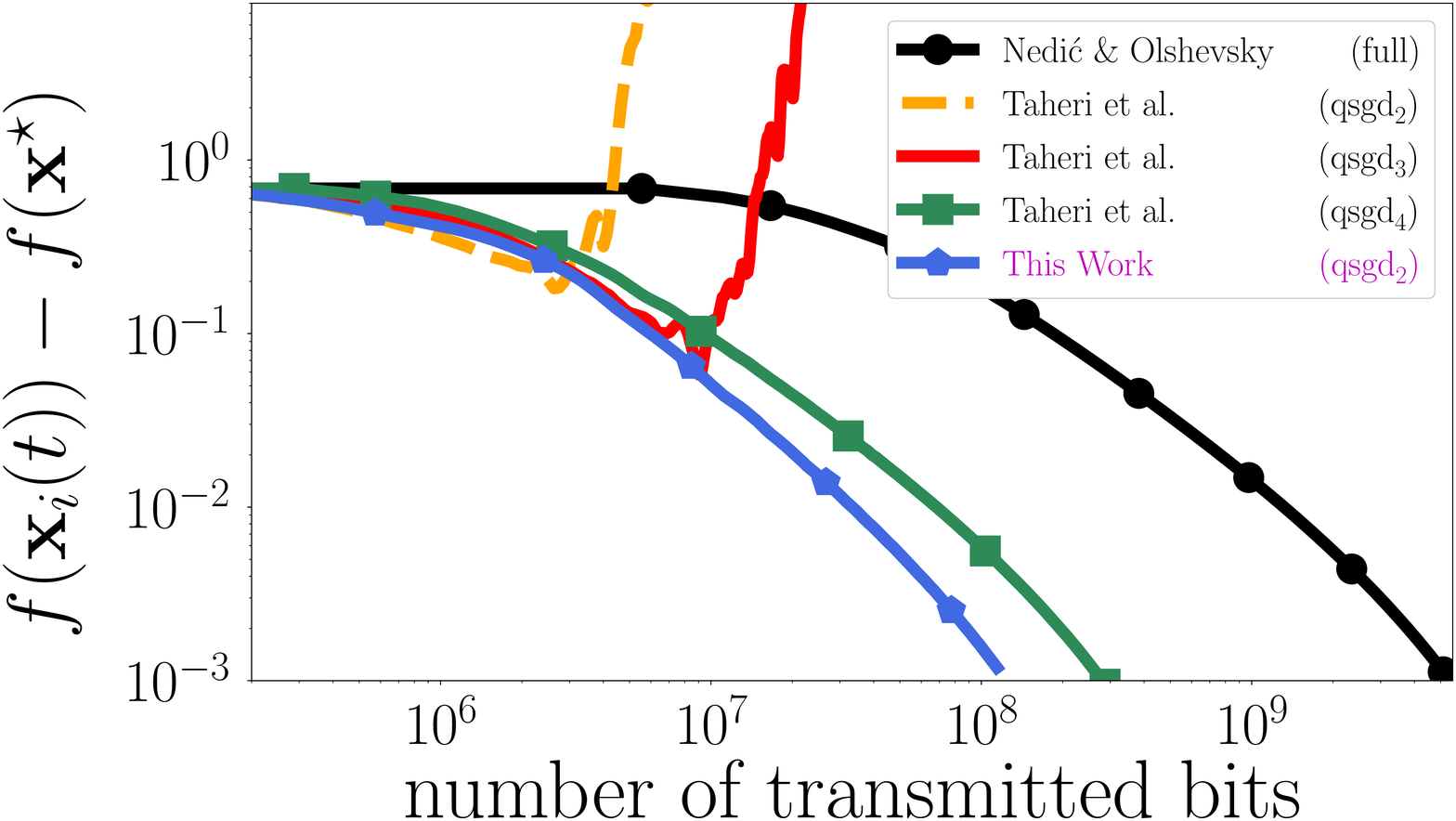}
			\put(-32,123){{\tiny\cite{nedic2016stochastic}}}
			\put(-46,114.8){{\tiny\cite{taheri2020quantized}}}
			\put(-46,106.5){{\tiny\cite{taheri2020quantized}}}
			\put(-46,98.4){{\tiny\cite{taheri2020quantized}}}
		\end{minipage}
		\caption{\textbf{Communication-Efficiency and Arbitrary Compression:} Each simulation is the average of 10 runs. We consider a decentralized logistic regression problem in~\eqref{eq:log-reg-problem}, with \mbox{$n=100$} agents, \mbox{$d=200$} dimensional parameters, and \mbox{$m=20$} data samples, over a static, directed, and strongly connected Erd\H{o}s-R\'enyi graph with probability $(\log n)/n$. We compare the performance of our algorithm given \mbox{$\mathrm{qsgd}_{2}$} with~\cite{nedic2016stochastic} (no compression), and~\cite{taheri2020quantized} using \mbox{$\mathrm{qsgd}_{k}$}, for $k=2,3,4$. The loss curve of one agent \mbox{$i\in[n]$} is shown based on the number of \textbf{(left)} iterations, and \textbf{(right)} transmitted bits, across the network.}
		\label{fig:logistic-regression-iteration-bit}
	\end{figure*}
	
	Now, we consider a decentralized logistic regression problem with $\ell_2$ regularization loss as follows: 
	\begin{align}\label{eq:log-reg-problem}
		\min_{\vx\in\bbR^d} \bigg[f(\vx) &\coloneqq \frac{1}{n} \sum_{i = 1}^n f_i(\vx)\bigg],\nonumber\\
		f_i(\vx) \coloneqq \frac{1}{m} \sum_{r = 1}^m \log(1 &{+} \exp(-b_{ir} \va_{ir}^\top \vx)) + \frac{1}{2mn} \lVert \vx\rVert_2^2,
	\end{align}
	with $m$ (possibly non-iid) local data samples at each node \mbox{$i\in[n]$}, where \mbox{$a_{ir} \in \bbR^d$} and $b_{ir} {\in} \{-1, 1\}$ respectively denote the features and label of the $r$-th sample at node $i$. We consider a binary classification task on a synthetic dataset of two separable high-dimensional Cones. We also consider a static, directed, and strongly connected Erd\H{o}s-R\'enyi graph with connection probability \mbox{$(\log n)/n$} as the communication network. Note that by directed Erd\H{o}s-R\'enyi, we mean that for each two nodes \mbox{$i,j\in[n]$}, a link from $i$ to $j$ exists (independent of other links) with some probability (in this case \mbox{$(\log n)/n$}). Also, note that we select a realization of this class of graphs which is strongly connected. We consider a set of \mbox{$n=100$} agents with \mbox{$d{=}200$} dimensional parameters, and \mbox{$m=30$} local samples at each node. We also consider \mbox{$\mathrm{qsgd}_{k}$}~\cite{alistarh2017qsgd} as the compression operator. Similar to the previous example, we do not fine-tune $\gamma$ and simply select it to be \mbox{$\mcO(\omega)$}.
	
	We compare the performance of our algorithm with methods in~\cite{nedic2016stochastic,taheri2020quantized}. On the one hand, the algorithm in~\cite{nedic2016stochastic} has no compression module. On the other hand, the compressed gradient-push in~\cite{taheri2020quantized} does not converge for any arbitrary compression ratio. We therefore consider our algorithm with \mbox{$\mathrm{qsgd}_{2}$} and \mbox{$\gamma=\omega$}, as well as the method in~\cite{taheri2020quantized} with \mbox{$\mathrm{qsgd}_{k}$}, where \mbox{$k\in\{2,3,4\}$} is the precision level of the quantizer. For precision levels \mbox{$k<4$}, the method in~\cite{taheri2020quantized} does not converge. Figure~\ref{fig:logistic-regression-iteration-bit} shows the suboptimality loss of these methods given the number of iterations and transmitted bits. In this problem,~\cite{taheri2020quantized} converges for \mbox{$k=4$}, while our algorithm converges with \mbox{$k=2$}. The figure on the right-hand side shows that our algorithm converges with fewer communication bits without any fine-tuning on $\gamma$.
	

	\section{Conclusions}\label{sec:conclusion}
	This work studied decentralized consensus and stochastic optimization over a fixed, directed, and strongly connected network. Revisiting~\cite{koloskova2019decentralized,taheri2020quantized}, we proposed an algorithm with guaranteed convergence under any compression ratio \mbox{$\omega\in(0,1]$}, and appropriate assumptions. We further presented the theoretical guarantees for our algorithm under standard assumptions on three smooth function classes: (i) strongly-convex, (ii) convex, and (iii) non-convex. We also showed empirical analysis that illustrates the arbitrary compression and communication efficiency of the proposed method. Extensions and results to time-varying networks, scalability to the number of agents, and robustness to adversarial scenarios remain as future work.


	\bibliographystyle{IEEEbib}
	\bibliography{ref}

\end{document}